\newtheorem{lemma}{Lemma}
\newtheorem{corollary}{Corollary}
\newtheorem{proposition}{Proposition}
\newtheorem{remark}{Remark}
\newtheorem{example}{Example}
\newtheorem{definition}{Definition}
\newtheorem{theorem}{Theorem}
\DeclareMathOperator*{\maximize}{maximize}
\DeclareMathOperator*{\diag}{diag}
\DeclareMathAlphabet{\pazocal}{OMS}{zplm}{m}{n}
\renewcommand{\i}[1]{\pazocal{#1}}
\renewcommand{\i}[1]{\pazocal{#1}}
\journal{Linear Algebra and its Applications}
\begin{document}

\begin{frontmatter}

\title{Kronecker weights for instability analysis of Markov jump linear systems}

%% Group authors per affiliation:
\author[myfirstaddress]{Wenjie Mei\corref{mycorrespondingauthor}}
\ead{mei.wenjie.mu2@is.naist.jp}
\cortext[mycorrespondingauthor]{Corresponding author. Tel:+81-743-72-5353; fax:+81-043-72-5359}
\address[myfirstaddress]{Graduate School of Science and Technology, Nara Institute of Science and Technology, Nara, Japan}

\author[myfirstaddress]{Masaki Ogura}
\ead{oguram@is.naist.jp}

\begin{abstract}
In this paper, we analyze the instability of continuous-time Markov jump linear systems. Although there exist several effective criteria for the stability of Markov jump linear systems, there is a lack of methodologies for verifying their instability. In this paper, we present a novel criterion for the exponential mean instability of Markov jump linear systems.  The main tool of our analysis is an auxiliary Markov jump linear system, which results from taking the Kronecker products of the given system matrices and a set of appropriate matrix weights. We furthermore show that the problem of finding matrix weights for tighter instability analysis can be transformed to the spectral optimization of an affine matrix family, which can be efficiently performed by gradient-based non-smooth optimization algorithms. We confirm the effectiveness of the proposed methods by numerical examples.
\end{abstract}

\begin{keyword}
Continuous-time Markov jump linear systems, instability, matrix weights, spectral optimization.
\end{keyword}

\end{frontmatter}

\section{Introduction} 

The stability analysis of switched linear systems is of fundamental importance in the systems and control theory~\cite{Lin2009}. In particular, the stability analysis of Markov jump linear systems has been extensively investigated in the literature~\cite{Costa2013,Shi2015}. Markov processes allow us to efficiently model random changes in the parameters of dynamical systems. Therefore, Markov jump linear systems have several applications including  power systems~\cite{PengShi1999a}, satellite control~\cite{Fragoso2005}, cell growth~\cite{Ogura2016n}, and temporal networks~\cite{Ogura2015c}. As for the stability analysis of Markov jump linear systems, Feng et al.~\cite{Feng1992} gave a necessary and sufficient condition for the exponential mean square stability of continuous-time Markov jump linear systems. Fang and Loparo~\cite{Fang2002a} studied discrete-time Markov jump linear systems and gave a necessary and sufficient condition for the exponential mean square stability. The authors in~\cite{Bolzern2010} investigated Markov jump linear systems characterized by piecewise-constant transition rates and gave a sufficient condition for the exponential mean square stability. A necessary and sufficient condition for the exponential mean stability of positive Markov jump linear systems was presented in~\cite{Ogura2014}. 

Besides the stability analysis that we mentioned above, the \emph{instability} analysis of Markov jump systems have been of interest in the literature due to its applications in, e.g., the predator-prey models under regime-switchings~\cite{Li2011a,Bao2016}. In this context, the authors in~\cite{Yin2010} discussed stability and instability of regime-switching jump diffusion processes. Shao et al.~\cite{Shao2014} presented easily verifiable conditions for the stability and instability of nonlinear regime-switching processes. The authors in~\cite{Liu2010} investigated the persistence (instability) and extinction (stability) of a single-species model under regime-switching driven by both white noise and colored noise. Liu~\cite{Liu2015b} studied the random perturbations and analyzed the persistence and extinction of a two-species cooperation model. 

However, there is a lack of practical criteria in the literature for instability analysis that is specifically tailored to Markov jump linear systems. Although the instability analyses in~\cite{Shao2014,Li2009} for Markov jump systems with diffusions can be applied to Markov jump linear systems, their results cannot be expected to be tight. A major reason is that their analyses do not distinguish system matrices having the same spectral abscissa (i.e., the maximum real part of the eigenvalues).  An exception to this trend can be found in~\cite{Ogura2016d}, where the authors proposed instability conditions for discrete-time Markov jump linear systems without such identification of system matrices. We also remark that, although almost sure stability has been often studied in the literature~\cite{Li2011a,Bao2016,Yin2010,Liu2010,Liu2015b,Shao2014,Li2009}, the exponential mean stability is more commonly employed in the systems and control theory~\cite{Costa2013,Shi2015}.

In this paper, we present an optimization-based framework for analyzing the instability of continuous-time Markov jump linear systems. We show that, if a certain matrix weighted by another set of matrices via Kronecker products has a nonnegative spectral abscissa, then the Markov jump linear system is not exponentially mean stable. We also prove that, the larger the size of the matrix weights, the tighter our instability analysis. Furthermore, we show that we can use a gradient-based non-smooth optimization algorithm to efficiently find local optimal matrix weights for tighter instability analysis.

This paper is organized as follows. After giving mathematical preliminaries, in Section~\ref{sec2} we give a brief overview of Markov jump linear systems and present our main result on instability analysis. The proof of the main result is given in Section~\ref{sec:proof}. The monotonicity result on our instability criteria is presented in Section~\ref{sec:weight}. In Section~\ref{sec:Stochastic}, we illustrate how we can employ a gradient-based non-smooth optimization algorithm for finding locally optimal matrix weights for tighter instability analysis. 

%-----------------------------------------------------------------------
\subsection{Mathematical preliminaries}

For a positive integer~$n$, let $I_n$ denote the $n\times n$ identity matrix. 
The Euclidean norm of a real vector~$v$ is denoted by~$\rVert v \rVert$. For a real matrix~$A$, let $\rVert A \rVert$ denote the maximum singular value of~$A$. The maximum real part of the eigenvalues of~$A$, denoted by~$\mu(A)$, is called as the spectral abscissa of~$A$. We say that $A$ is Hurwitz stable if $\mu(A) < 0$. For matrices~$A_1$, \dots,~$A_N$, let $\diag(A_1, \dotsc, A_N)$ or $\bigoplus_{i=1}^N A_i$ denote the block-diagonal matrix having the block diagonals~$A_1$, \dots, $A_N$. Let $e_i$ denote the $i$th standard unit vector in $\mathbb{R}^N$, i.e., the unit vector whose elements are all zero except the $i$th element being one. We let $\otimes$ denote the Kronecker product of matrices. Given a square matrix~$B$, the Kronecker sum of~$A$ and~$B$ is defined by $A \oplus B=A \otimes I_m+I_n \otimes B$, where $n$ and $m$ are the orders of~$A$ and $B$, respectively. It is known that the identities
\begin{equation}\label{eq:ABABAB}
e^A \otimes e^B  = e^{A \oplus B}
\end{equation}
and 
\begin{equation}\label{eq:ABAB}
\rVert A \otimes B \rVert= \rVert A \rVert \rVert B \rVert
\end{equation}
hold true (see, e.g.,~\cite{Brewer1978}). We also have 
\begin{equation}\label{eq:ABCD}
(A \otimes C)(B \otimes D)=(AB)\otimes (CD)
\end{equation}
for matrices $C$ and $D$ for which the product~$CD$ is defined.

For $x \in \mathbb{R}^n$ and a positive integer $p$ the vector $x^{[p]}$ is defined~\cite{Barkin1983} as the real vector of length 
\begin{equation}
n_p=\binom{n+p-1}{p},
\end{equation}
whose elements are all the lexicographically ordered monomials of degree $p$ in $x_1$, \dots,~$x_n$. The coefficients of the monomials are chosen in such a way that $\rVert x^{[p]} \rVert= \rVert x \rVert^p$. Then, we can show that 
\begin{equation}\label{eq:norm}
    \rVert e_i \otimes x^{[p]} \rVert= \rVert x \rVert^p.
\end{equation}
Also, the following lemma holds true: 
\begin{lemma}[{\cite[Lemma~1.5]{Ogura2014}}] \label{lemma}
The set $\{x^{[p]} \colon x \in \mathbb{R}^n \}$ is a basis of $\mathbb{R}^{n_p}$. 
\end{lemma}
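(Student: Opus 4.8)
The plan is to read the statement in the only sensible way. Since $\{x^{[p]} \colon x \in \mathbb{R}^n\}$ is an uncountable family, it cannot literally be linearly independent, so ``basis'' must mean that this family \emph{spans}~$\mathbb{R}^{n_p}$ (equivalently, that it contains a basis). The dimension count is already in our favour, because $n_p = \binom{n+p-1}{p}$ is exactly the number of monomials of degree $p$ in $n$ variables, so spanning $\mathbb{R}^{n_p}$ will automatically yield a genuine basis after discarding redundant vectors. Hence it suffices to prove that the span of the family is all of $\mathbb{R}^{n_p}$, which I would establish via the dual characterization: a subset spans $\mathbb{R}^{n_p}$ if and only if the only vector orthogonal to every member is the zero vector. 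Thus the goal reduces to showing that $c^\top x^{[p]} = 0$ for all $x \in \mathbb{R}^n$ forces $c = 0$.

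First I would make the pairing explicit. By construction the entries of $x^{[p]}$ are the degree-$p$ monomials $x^\alpha = x_1^{\alpha_1}\cdots x_n^{\alpha_n}$, ranging over multi-indices $\alpha$ with $|\alpha| = p$, each scaled by a fixed \emph{nonzero} normalizing constant $\beta_\alpha$ chosen so that $\rVert x^{[p]} \rVert = \rVert x \rVert^p$. Consequently, for any fixed $c \in \mathbb{R}^{n_p}$ the quantity $c^\top x^{[p]} = \sum_{|\alpha|=p} c_\alpha \beta_\alpha x^\alpha$ is a homogeneous polynomial of degree $p$ in $x_1, \dots, x_n$, and the hypothesis $c^\top x^{[p]} = 0$ for all $x$ says precisely that this polynomial vanishes identically on $\mathbb{R}^n$.

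Finally I would invoke the linear independence of distinct monomials as functions on $\mathbb{R}^n$, which is where the only real content lies. A clean way to extract each coefficient is to apply the differential operator $\partial^\alpha = \partial^{|\alpha|}/(\partial x_1^{\alpha_1} \cdots \partial x_n^{\alpha_n})$ to the identically-zero polynomial and evaluate at the origin; since every monomial here has total degree exactly $p$, this isolates the single surviving term and yields $c_\alpha \beta_\alpha\, \alpha! = 0$, whence $c_\alpha = 0$ for every $\alpha$ because $\beta_\alpha \neq 0$. (Alternatively one can argue by induction on $n$, repeatedly specializing one variable and using that a one-variable polynomial with infinitely many roots is the zero polynomial.) This gives $c = 0$ and hence proves that the span is all of $\mathbb{R}^{n_p}$. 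The main obstacle is conceptual rather than computational: one must first settle the correct reading of ``basis,'' and the independence of monomials relies essentially on the infinitude of $\mathbb{R}$ (the statement would fail over a finite field), while the identity $n_p = \binom{n+p-1}{p}$ is what converts spanning into a genuine basis.
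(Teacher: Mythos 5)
Your proposal is correct, but there is nothing in the paper to compare it against: the paper does not prove this lemma at all, importing it verbatim from \cite[Lemma~1.5]{Ogura2014}, so yours is a self-contained substitute rather than an alternative to an internal argument. Two points are worth highlighting. First, your resolution of the wording is the right one and is exactly how the paper uses the lemma: in the proof of Proposition~\ref{prop:instability} the authors invoke it only to conclude that the vectors $\delta_0 \otimes x_0^{[p]}$ ``span the whole space,'' so reading ``basis'' as ``spanning set'' (equivalently, ``contains a basis'') is faithful to the intended content, and the count $n_p = \binom{n+p-1}{p}$ confirms the dimensions are consistent. Second, your actual argument is sound: reducing spanning to the triviality of the orthogonal complement, observing that $c^\top x^{[p]} = \sum_{|\alpha|=p} c_\alpha \beta_\alpha x^\alpha$ is a homogeneous degree-$p$ polynomial vanishing identically on $\mathbb{R}^n$, and extracting each coefficient with $\partial^\alpha$ at the origin is a complete proof, since the normalizing constants are $\beta_\alpha = \sqrt{\binom{p}{\alpha}} > 0$ (this is forced by the defining requirement $\lVert x^{[p]} \rVert = \lVert x \rVert^p$, as one sees from $\lVert x \rVert^{2p} = \bigl(\sum_i x_i^2\bigr)^p = \sum_{|\alpha|=p} \binom{p}{\alpha} x^{2\alpha}$), so dividing by $\beta_\alpha \alpha! \neq 0$ is legitimate. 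Your closing remark that the independence of monomials uses the infinitude of $\mathbb{R}$ is also apt; the lemma is a statement about real polynomial functions, not formal polynomials.
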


For an $n\times n$ real matrix~$A$, we define $A_{[p]}$ as the unique $n_p \times n_p$ matrix such that, if an $\mathbb{R}^n$-valued function $x$ satisfies $dx/dt = Ax$, then the vectorial function~$x^{[p]}$ satisfies the differential equation~${dx^{[p]}}/{dt}=A_{[p]}x^{[p]}$~(see \cite{Barkin1983}). It is known \cite{Barkin1983} that the equation 
\begin{equation}
(cA)_{[p]}=c(A)_{[p]}
\end{equation}
holds true for any real number~$c$.

\section{Instability analysis} \label{sec2}

We present the main result of this paper in this section. In Subsection~\ref{sec:MJLS}, we introduce Markov jump linear systems~\cite{Costa2013} and review their stability properties. Then, in Subsection~\ref{sec:mainResult}, we state our main result on the instability analysis of Markov jump linear systems.  

\subsection{Markov jump linear systems}\label{sec:MJLS}

Markov jump linear systems~\cite{Costa2013} is a class of switched linear systems~\cite{Lin2009} and is defined as follows:  

\begin{definition}[{\cite{Costa2013}}]
Let $A_1$, \dots, $A_N$ be $n \times n$ real matrices. Let $r$ be a continuous-time Markov process taking values in the set~$\{1, \dotsc, N\}$. Then, the following stochastic differential equation 
\begin{equation}\label{eq:MJLS}
\Sigma: \frac{dx}{dt} = A_{r(t)}x(t),\ x(0)=x_0 
\end{equation}
is called a \emph{Markov jump linear system}. 
\end{definition} 

In this paper, we are concerned with the stability property of Markov jump linear systems defined as follows: 

\begin{definition} \label{def:main}
Let $p$ be a positive integer. The Markov jump linear system~\eqref{eq:MJLS} is said to be \emph{exponentially $p$th mean stable} if there exist positive constants $C$ and $\beta$ such that
\begin{equation}\label{eq:def:pthmeanstability}
{\mathbb{E}} [\rVert x(t) \rVert^p] \leq Ce^{-\beta t}\rVert x_0\rVert^p
\end{equation} 
for all $x_0\in \mathbb{R}^n$ and $t \geq 0$.  
\end{definition}

We can find various stability criteria for Markov jump linear systems in the literature~\cite{Shi2015}. The following proposition gives a characterization of the mean square stability (i.e., the $p$th mean stability with $p=2$) in terms of the Hurwitz stability of a certain matrix.  

\begin{proposition}[{\cite{Feng1992}}]
Let $Q$ be the infinitesimal generator of the Markov process~$r$. Then, the Markov jump linear system~\eqref{eq:MJLS} is exponentially mean square stable if and only if the matrix
\begin{equation*}%\label{matrix:main:msq}
Q^\top \otimes I_{n^2}+\diag \big(A_1\oplus A_1,\dotsc,A_N\oplus A_N \big)
\end{equation*}
is Hurwitz stable. 
\end{proposition}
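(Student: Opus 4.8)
The plan is to reduce the exponential mean square stability of~$\Sigma$ to the exponential stability of a single finite-dimensional linear differential equation whose coefficient matrix is exactly the one displayed in the statement. Since $\mathbb{E}[\|x(t)\|^2]$ is a second-order moment of the state, the natural objects to track are the mode-conditioned second-moment matrices
\[
Q_i(t) = \mathbb{E}\bigl[x(t)x(t)^\top \mathbf{1}_{\{r(t)=i\}}\bigr], \qquad i=1,\dots,N,
\]
where $\mathbf{1}_{\{\cdot\}}$ denotes the indicator function. Because $\mathbb{E}[\|x(t)\|^2]=\sum_{i=1}^N \operatorname{tr}(Q_i(t))$, the decay estimate~\eqref{eq:def:pthmeanstability} with $p=2$ is equivalent to the exponential decay of the tuple $(Q_1(t),\dots,Q_N(t))$.

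The next step is to derive the dynamics of the~$Q_i$. Using Dynkin's formula for the joint process $(x,r)$ with infinitesimal generator $Q=(q_{ij})$, I would show that the $Q_i$ satisfy the coupled linear matrix differential equations
\[
\dot{Q}_i(t) = A_i Q_i(t) + Q_i(t) A_i^\top + \sum_{j=1}^N q_{ji}\, Q_j(t), \qquad i=1,\dots,N,
\]
in which the first two terms describe the deterministic flow of~$A_i$ during a sojourn in mode~$i$ and the sum encodes the probability flux between modes. Vectorizing with $z_i=\operatorname{vec}(Q_i)$ and invoking the identity $\operatorname{vec}(A_iX+XA_i^\top)=(A_i\oplus A_i)\operatorname{vec}(X)$ (a direct consequence of $\operatorname{vec}(AXB)=(B^\top\otimes A)\operatorname{vec}(X)$ and the definition of the Kronecker sum), I obtain $\dot{z}_i=(A_i\oplus A_i)z_i+\sum_j q_{ji}z_j$. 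Stacking $z=(z_1^\top,\dots,z_N^\top)^\top$, the coupling term contributes precisely $Q^\top\otimes I_{n^2}$ --- the transpose appearing because the coefficient $q_{ji}$ multiplies $z_j$ in the $i$th block --- so that $\dot{z}=\mathcal{A}z$ with
\[
\mathcal{A}=Q^\top\otimes I_{n^2}+\diag\bigl(A_1\oplus A_1,\dots,A_N\oplus A_N\bigr).
\]

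It then remains to equate exponential decay of~$z$ with Hurwitz stability of~$\mathcal{A}$. The ``if'' direction is immediate: when $\mu(\mathcal{A})<0$, the solution $z(t)=e^{\mathcal{A}t}z(0)$ decays exponentially, and bounding $z(0)$ in terms of $\|x_0\|^2$ yields~\eqref{eq:def:pthmeanstability}. The main obstacle is the ``only if'' direction, because the trajectories that actually arise are confined to the cone of (vectorized) tuples of positive semidefinite matrices rather than filling all of $\mathbb{R}^{Nn^2}$, and exponential decay of cone trajectories does not by itself force the generator to be Hurwitz. I would resolve this by exploiting the positivity structure: the semigroup $e^{\mathcal{A}t}$ leaves this proper cone invariant, so by a Perron--Frobenius (Krein--Rutman) argument the spectral abscissa $\mu(\mathcal{A})$ is attained by a real eigenvalue whose eigenvector lies in the cone. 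Choosing the initial second-moment matrix aligned with that eigenvector produces a trajectory decaying at the exact rate $\mu(\mathcal{A})$, so mean square stability forces $\mu(\mathcal{A})<0$. A final point to check is that restricting attention to the symmetric (positive semidefinite) part loses nothing: the Lyapunov-type structure of each block $A_i\oplus A_i$ guarantees that the spectral abscissa of~$\mathcal{A}$ is already attained on the symmetric subspace, so controlling the symmetric dynamics controls all of~$\mathcal{A}$.
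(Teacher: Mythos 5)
A preliminary remark: the paper does not prove this proposition at all --- it is imported as a known result from Feng et al.~\cite{Feng1992} --- so your attempt can only be compared with the standard literature argument and with the paper's closely related machinery (Proposition~\ref{prop:instability} and Lemma~\ref{lemma}). Your reduction is precisely that standard argument, and it is correct as far as it goes: the mode-conditioned second moments $Q_i(t)=\mathbb{E}\bigl[x(t)x(t)^\top\mathbf{1}_{\{r(t)=i\}}\bigr]$ do satisfy $\dot Q_i=A_iQ_i+Q_iA_i^\top+\sum_{j}q_{ji}Q_j$, vectorization does produce exactly the matrix $\mathcal{A}=Q^\top\otimes I_{n^2}+\diag(A_1\oplus A_1,\dotsc,A_N\oplus A_N)$, and the ``if'' direction is as immediate as you say. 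You also correctly locate the entire difficulty in the ``only if'' direction: realizable moment trajectories do not fill $\mathbb{R}^{Nn^2}$.

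Your resolution of that difficulty, however, has a genuine gap. The space $\mathbb{R}^{Nn^2}$ splits into two $\mathcal{A}$-invariant subspaces: vectorized tuples of symmetric matrices ($\mathcal{S}$) and of skew-symmetric matrices ($\mathcal{K}$), and all moment data, as well as the PSD cone, lie in $\mathcal{S}$. Mean square stability therefore controls only $\mathcal{A}|_{\mathcal{S}}$, and for that step Krein--Rutman is not even needed: PSD tuples span $\mathcal{S}$, so decay on the cone transfers to $\mathcal{S}$ by linearity. What is missing is the inequality $\mu(\mathcal{A}|_{\mathcal{K}})\le\mu(\mathcal{A}|_{\mathcal{S}})$, i.e.\ exactly your ``final point to check.'' You attribute it to the ``Lyapunov-type structure'' of the blocks, but that reasoning is only automatic when $N=1$ (the eigenvalues of $A\oplus A$ are $\lambda_i+\lambda_j$ and the maximizing pair $i=j$ lies in $\mathcal{S}$); once the coupling $Q^\top\otimes I_{n^2}$ is present the claim is true but requires an argument. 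Moreover, your Krein--Rutman step cannot supply it: the PSD-tuple cone has empty interior in $\mathbb{R}^{Nn^2}$ (it is not a proper cone there, only inside $\mathcal{S}$), so Perron--Frobenius theory yields a cone eigenvector for $\mu(\mathcal{A}|_{\mathcal{S}})$, not for $\mu(\mathcal{A})$; asserting the latter presupposes the very claim you defer. A clean way to close the gap --- which also renders the cone argument superfluous --- is to observe that $e^{\mathcal{A}t}$ is the vectorization of the map
\begin{equation}
(X_1,\dotsc,X_N)\;\longmapsto\;\Biggl(\sum_{j=1}^N \mathbb{E}\bigl[\Phi(t)X_j\Phi(t)^\top\mathbf{1}_{\{r(t)=i\}}\mid r(0)=j\bigr]\Biggr)_{i=1}^N
\end{equation}
for \emph{arbitrary}, not necessarily symmetric, matrices $X_j$, where $\Phi(t)$ is the random transition matrix of $\Sigma$: indeed $Y=\Phi X\Phi^\top$ satisfies $\dot Y=A_{r(t)}Y+YA_{r(t)}^\top$ pathwise whatever $X$ is, so both sides solve the same linear differential equation with the same initial data. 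Consequently $\lVert e^{\mathcal{A}t}\rVert\le c\,\max_j\mathbb{E}\bigl[\lVert\Phi(t)\rVert^2\mid r(0)=j\bigr]$, and mean square stability applied to the $n$ standard basis vectors as initial states (for each initial mode) bounds the right-hand side by $c'e^{-\beta t}$, so $\mathcal{A}$ is Hurwitz on all of $\mathbb{R}^{Nn^2}$, skew part included. Incidentally, this is the very issue the paper sidesteps by working with the symmetric lift $x^{[p]}$ of dimension $n_p$ rather than the full Kronecker square; that is why the spanning argument of Lemma~\ref{lemma} suffices for Proposition~\ref{prop:instability} but would not suffice here.
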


For the specific class of Markov jump linear systems called \emph{positive Markov jump linear systems}~\cite{Ogura2014,Bolzern2014}, the following proposition gives a characterization of the $p$th mean stability for any $p$ in terms of the Hurwitz stability of yet another matrix. Recall that the Markov jump linear system~\eqref{eq:MJLS} is said to be \emph{positive} \cite{Ogura2014,Bolzern2014} if the matrices $A_1$, \dots, $A_N$ are Metzler, i.e., the matrices have nonnegative off-diagonals~\cite{Farina2000}. 

\begin{proposition}[{\cite[Theorem~5.1]{Ogura2014}}]\label{prop:ogura}
Let $p$ be a positive integer. Assume that the Markov jump linear system~\eqref{eq:MJLS} is positive. Let $Q$ be the infinitesimal generator of the Markov process~$r$. Then, the Markov jump linear system~\eqref{eq:MJLS} is exponentially $p$th mean stable if and only if the matrix
\begin{equation}\label{matrix:main}
{\i T}=Q^\top \otimes I_{n_p}+\diag \big((A_1)_{[p]},\dotsc,(A_N)_{[p]} \big)
\end{equation}
is Hurwitz stable~\cite{Ogura2014}. 
\end{proposition}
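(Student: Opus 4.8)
The plan is to reduce the stochastic stability question to a deterministic linear-systems question by passing to the conditional moments of the lifted state $x^{[p]}$, and then to exploit positivity to turn the resulting bound into an exact equivalence. For each mode $i \in \{1,\dots,N\}$ I would introduce the conditional moment vector
\[
q_i(t) = \mathbb{E}\big[\mathbf{1}_{\{r(t)=i\}}\, x^{[p]}(t)\big] \in \mathbb{R}^{n_p},
\]
and stack them as $q = [q_1^\top,\dots,q_N^\top]^\top$. Between jumps the lifted state obeys $dx^{[p]}/dt = (A_{r(t)})_{[p]} x^{[p]}$ by the defining property of $A_{[p]}$, while the Markov process redistributes probability mass among the modes at the rates encoded in $Q$. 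Applying Dynkin's formula to the extended generator of the joint process $(x(t),r(t))$ on the test functionals $x \mapsto \mathbf{1}_{\{r=i\}}(x^{[p]})_m$, I would show that $q$ satisfies $dq/dt = {\i T} q$, where the block $\diag((A_1)_{[p]},\dots,(A_N)_{[p]})$ captures the continuous flow in each mode and the term $Q^\top \otimes I_{n_p}$ captures the inter-mode transitions, its $i$-th block acting as $\sum_j Q_{ji}q_j$. I expect this derivation to be the main obstacle: it requires justifying the interchange of expectation and differentiation and carefully bookkeeping the jump terms, and it is where the precise form of ${\i T}$ is pinned down.

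Next I would tie the scalar quantity of interest to $q$. Using $\rVert x^{[p]} \rVert = \rVert x \rVert^p$ we have $\mathbb{E}[\rVert x(t)\rVert^p] = \mathbb{E}[\rVert x^{[p]}(t)\rVert]$. Positivity enters here: since each $A_i$ is Metzler, $e^{A_i t}\ge 0$, so a nonnegative initial condition $x_0 \ge 0$ yields $x(t)\ge 0$ along every sample path, hence $x^{[p]}(t)\ge 0$ and $q_i(t)\ge0$. On the nonnegative orthant the Euclidean norm, the $1$-norm, and the linear functional $v\mapsto \mathbf{1}^\top v$ are mutually equivalent, so $\mathbb{E}[\rVert x(t)\rVert^p]$ is sandwiched between constant multiples of $\mathbf{1}^\top\sum_i q_i(t)$ and hence of $\rVert q(t)\rVert$. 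Thus, for nonnegative initial data, exponential decay of the $p$th mean is \emph{equivalent} to exponential decay of $q(t)=e^{{\i T}t}q(0)$.

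Finally I would establish the two implications. For sufficiency, Hurwitz stability of ${\i T}$ gives $\rVert e^{{\i T}t}\rVert\le Ce^{-\beta t}$; combined with $\rVert q(0)\rVert \le \rVert x_0^{[p]}\rVert = \rVert x_0\rVert^p$ and the previous step this yields \eqref{eq:def:pthmeanstability} for $x_0\ge 0$, and I would extend it to arbitrary $x_0$ through the pathwise comparison $\lvert x(t)\rvert \le \tilde x(t)$ (valid because $e^{A_i t}\ge 0$), where $\tilde x$ is the trajectory started from $\lvert x_0\rvert$. For necessity, the decisive structural fact is that ${\i T}$ is Metzler: each $(A_i)_{[p]}$ is Metzler whenever $A_i$ is, and $Q^\top\otimes I_{n_p}$ is Metzler because the off-diagonal rates of $Q$ are nonnegative. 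I would then choose a uniform initial law $\mathbb{P}(r(0)=i)=1/N$ together with a strictly positive $x_0>0$, so that $q(0)>0$ entrywise, since the monomials of a positive vector are positive; Lemma~\ref{lemma} underpins the genericity of this choice. Exponential $p$th mean stability forces $\rVert q(t)\rVert\to 0$, and for a Metzler matrix the convergence $e^{{\i T}t}q(0)\to0$ from a strictly positive $q(0)$ implies $\mu({\i T})<0$ — most transparently by pairing with a nonnegative left Perron eigenvector $u$ of ${\i T}$, for which $u^\top e^{{\i T}t}q(0)=e^{\mu({\i T})t}\,u^\top q(0)$ with $u^\top q(0)>0$. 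This Metzler/Perron--Frobenius structure is precisely what upgrades the criterion to an equivalence for every $p$, rather than a merely sufficient condition.
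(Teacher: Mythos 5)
Your proof is correct in outline, but note first that the paper never proves this proposition: it is imported verbatim from \cite[Theorem~5.1]{Ogura2014}, so the only internal point of comparison is the proof of Proposition~\ref{prop:instability}, which establishes the necessity half (mean stability implies Hurwitz stability of ${\i T}$) \emph{without} any positivity assumption. Your first step is exactly the moment equation that proof cites (Proposition~5.3 of \cite{Ogura2014}); your stacked vector $q(t)$ is the paper's $\mathbb{E}[\delta(t)\otimes x(t)^{[p]}]$. Where you genuinely diverge is in how necessity is concluded: the paper shows $e^{{\i T}t}(\delta_0\otimes x_0^{[p]})\to 0$ for every $\delta_0$ and $x_0$ and then invokes the spanning property of Lemma~\ref{lemma} to conclude $e^{{\i T}t}z\to 0$ for all $z\in\mathbb{R}^{Nn_p}$, an argument valid for arbitrary $A_i$; you instead use that ${\i T}$ is Metzler and pair a nonnegative left eigenvector for $\mu({\i T})$ against a strictly positive $q(0)$, which needs positivity but avoids Lemma~\ref{lemma} entirely (your parenthetical claim that this lemma ``underpins'' your choice of initial condition does no actual work in your argument). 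For the sufficiency half --- which the paper nowhere proves, and which genuinely requires positivity, as Example~\ref{example:1} shows --- your mechanism is the right one: along sample paths $x^{[p]}(t)\ge 0$, so $\mathbb{E}[\lVert x(t)\rVert^p]$ is equivalent, up to dimension constants, to the linear functional $\mathbf{1}^\top\sum_{i}q_i(t)$ and hence to $\lVert e^{{\i T}t}q(0)\rVert$; this exactness is precisely what is unavailable without positivity, where one only gets the one-sided bound $\lVert\mathbb{E}[\delta\otimes x^{[p]}]\rVert\le\mathbb{E}[\lVert x\rVert^p]$ that the paper uses. The pathwise domination $\lvert x(t)\rvert\le\tilde x(t)$ then correctly extends the decay estimate from $x_0\ge 0$ to all $x_0$. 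Two facts you assert should be justified or cited to make the sketch complete: that $A$ Metzler implies $A_{[p]}$ Metzler (true, since $e^{A_{[p]}t}$ is the matrix taking $x_0^{[p]}$ to $(e^{At}x_0)^{[p]}$, whose entries are polynomials with nonnegative coefficients in the entries of $e^{At}\ge 0$, so $e^{A_{[p]}t}\ge 0$ for all $t\ge 0$), and that a Metzler matrix admits a nonnegative left eigenvector associated with its spectral abscissa (Perron--Frobenius applied to ${\i T}+cI\ge 0$ for large $c>0$). With those filled in, your argument constitutes a valid and self-contained proof of the cited result.
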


However, the stability condition in Proposition~\ref{prop:ogura} does not necessarily hold true without positivity of the system. Let us consider the following simple but yet illustrative example. 

\begin{example} \label{example:1}
Let us consider the Markov jump linear system~\eqref{eq:MJLS} with
\begin{equation}
A_1=\begin{bmatrix} 
0&-1   \\
1&0 \end{bmatrix},\ 
A_2=
\begin{bmatrix} 
0&1  \\
-1&0 \end{bmatrix}. 
\end{equation}
and the $\{1, 2\}$-valued Markov process~$r$ having the infinitesimal generator
\begin{equation}
Q= \begin{bmatrix} 
    -1&1  \\
    1&-1 \end{bmatrix}. 
\end{equation}
Since $A_1$ and $A_2$ are skew-symmetric, the matrix exponentials~$e^{A_1t}$ and~$e^{A_2t}$ are unitary for all $t \geq 0$. This implies that $\rVert x(t) \rVert = \rVert x_0 \rVert$ for all $t\geq 0$. Therefore, the Markov jump linear system~\eqref{eq:MJLS} is not exponentially $p$th mean stable for any $p$. On the other hand, if $p=1$, then the matrix~$\i T$ given in~\eqref{matrix:main} has the spectral abscissa~$-1$ and, therefore, is Hurwitz stable. Hence, the stability of the Markov jump linear system and the matrix~$\i T$ are not compatible. 
\end{example}

In the next subsection, we fill in the gap between the exponential mean stability of the Markov jump linear system~\eqref{eq:MJLS} and the spectral abscissa of the matrix~$\i T$, and present a novel instability criterion for Markov jump linear systems.

\subsection{Main result}\label{sec:mainResult}

In order to state the main result of this paper, we introduce another class of switched linear systems called \emph{deterministic switched linear systems}. 

\begin{definition}[\cite{Lin2009}]
Let $A_1$ ,\dots, $A_N$ be $n \times n$ real matrices. Let $\sigma$ be a piecewise constant function taking values in the set~$\{1, \dotsc, N\}$. Then, the differential equation
\begin{equation}\label{eq:Sigmad}
\frac{dx}{dt} = A_{\sigma(t)}x(t),\ x(0)=x_0 
\end{equation}
is called a \emph{deterministic switched linear system}. The deterministic switched linear system \eqref{eq:Sigmad} is said to be \emph{stable} if  there exists $C>0$ such that
\begin{equation}\label{eq:stableDeterministicSwitchedSystem}
\rVert x(t) \rVert \leq C \rVert x_0\rVert
\end{equation} 
for all $x_0$, $t \geq 0$, and the switching signal $\sigma$. 
\end{definition}

The following theorem presents a novel instability criterion for Markov jump linear systems and is the main result of this paper. The proof of the theorem is presented in Section~\ref{sec:proof}. 

\begin{theorem}\label{thm:main}
Let $m$ be a positive integer. Let $W_1$, \dots, $W_N$ be $m\times m$ real matrices. Assume that the deterministic switched linear system
\begin{equation}\label{eq:sls:W}
\Sigma_{d, W}:\frac{dy}{dt} = W_{\sigma(t)}y(t), \ y(0)=y_0
\end{equation}
is stable. If the matrix
\begin{equation}\label{eq:hattau1}
\hat{\i T} =Q^\top \otimes I+\diag \big( ({W}_1 \oplus A_1)_{[p]},\dotsc,({W}_N \oplus A_N)_{[p]} \big)
\end{equation}
is not Hurwitz stable, then the Markov jump linear system~$\Sigma$ is not exponentially $p$th mean stable.
\end{theorem}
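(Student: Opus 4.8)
The plan is to prove the contrapositive: assuming that the Markov jump linear system $\Sigma$ is exponentially $p$th mean stable, I will show that $\hat{\i T}$ is Hurwitz stable. The bridge is the auxiliary Markov jump linear system
\begin{equation*}
\hat{\Sigma}:\frac{dz}{dt}=(W_{r(t)}\oplus A_{r(t)})z(t),
\end{equation*}
driven by the same Markov process~$r$, whose $p$th-moment generator is exactly the matrix $\hat{\i T}$ of \eqref{eq:hattau1} (this is the construction behind Proposition~\ref{prop:ogura}, applied to the matrices $W_i\oplus A_i$ in place of $A_i$). First I would record the key structural fact that, along any realized switching path, the transition matrix of $\hat{\Sigma}$ factors as a Kronecker product: using $e^{(W_i\oplus A_i)\tau}=e^{W_i\tau}\otimes e^{A_i\tau}$ from \eqref{eq:ABABAB} together with the mixed-product rule \eqref{eq:ABCD}, the flow of $\hat{\Sigma}$ over $[0,t]$ equals $U_\sigma(t)\otimes V_\sigma(t)$, where $U_\sigma(t)$ and $V_\sigma(t)$ are the transition matrices of $dy/dt=W_\sigma y$ and of the physical system $dx/dt=A_\sigma x$, respectively, for the realized signal~$\sigma$.

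Second I would turn the two hypotheses into norm bounds on these factors. Stability of the deterministic switched system~$\Sigma_{d,W}$ supplies a uniform bound $\rVert U_\sigma(t)\rVert\le C$ over all signals~$\sigma$ and all $t\ge 0$. For the physical factor, I would first upgrade the pointwise decay of $\Sigma$ to a uniform one: applying \eqref{eq:def:pthmeanstability} to each standard basis vector $x_0=e_k$ and invoking equivalence of matrix norms yields $\mathbb{E}[\rVert V_r(t)\rVert^p]\le C'e^{-\beta t}$. Combining these through the multiplicativity \eqref{eq:ABAB}, namely $\rVert U_\sigma(t)\otimes V_\sigma(t)\rVert=\rVert U_\sigma(t)\rVert\,\rVert V_\sigma(t)\rVert$, shows that $\hat{\Sigma}$ is itself exponentially $p$th mean stable, i.e.\ $\mathbb{E}[\rVert z(t)\rVert^p]\le C''e^{-\beta t}\rVert z_0\rVert^p$ for \emph{every} $z_0$.

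Finally I would convert the $p$th mean stability of $\hat{\Sigma}$ into Hurwitz stability of $\hat{\i T}$. Writing $Q_i(t)=\mathbb{E}[z^{[p]}(t)\mathbf{1}_{\{r(t)=i\}}]$ and stacking the $Q_i$, the standard moment computation gives $dQ/dt=\hat{\i T}Q$, hence $Q(t)=e^{\hat{\i T}t}Q(0)$. Since $\rVert Q_i(t)\rVert\le\mathbb{E}[\rVert z^{[p]}(t)\rVert]=\mathbb{E}[\rVert z(t)\rVert^p]$, the previous step forces $e^{\hat{\i T}t}Q(0)\to 0$ exponentially for every initial condition of the form $Q(0)=e_j\otimes z_0^{[p]}$ (started deterministically in mode~$j$ at~$z_0$). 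By Lemma~\ref{lemma} the vectors $z_0^{[p]}$ span $\mathbb{R}^{(mn)_p}$, so these $Q(0)$ span the whole moment space $\mathbb{R}^{N(mn)_p}$; therefore $e^{\hat{\i T}t}\to 0$ and $\hat{\i T}$ is Hurwitz stable, which establishes the contrapositive.

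The step I expect to be delicate is this last one, deducing Hurwitz stability of $\hat{\i T}$ from $p$th mean stability of $\hat{\Sigma}$. Because $\hat{\Sigma}$ need not be positive, I cannot use Proposition~\ref{prop:ogura} as an equivalence; only the one-sided estimate $\rVert\mathbb{E}[z^{[p]}\mathbf{1}]\rVert\le\mathbb{E}[\rVert z^{[p]}\rVert]$ is available, and indeed the reverse implication genuinely fails, as Example~\ref{example:1} shows. The crux is therefore to ensure the decay is witnessed on a \emph{full} spanning set of initial conditions rather than on a possibly deficient subspace; this is precisely what Lemma~\ref{lemma} provides, and it is why the argument must run through general $z_0$ via the operator-norm factorization $U_\sigma\otimes V_\sigma$, rather than through the rank-one tensors $y_0\otimes x_0$ suggested by the decomposition $z=y\otimes x$.
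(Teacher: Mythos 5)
Your proof is correct, and although it shares the paper's overall architecture---the auxiliary system $\hat\Sigma$ with matrices $W_i\oplus A_i$, the pathwise Kronecker factorization of its flow via \eqref{eq:ABABAB} and \eqref{eq:ABCD}, and the moment-ODE-plus-spanning argument (the content of Proposition~\ref{prop:instability}, which indeed holds without positivity)---the bridge between $\Sigma$ and $\hat\Sigma$ runs in the opposite logical direction and through a different estimate. The paper works directly: from non-Hurwitz $\hat{\i T}$, Proposition~\ref{prop:instability} yields an initial state $\hat x_0$ of $\hat\Sigma$ whose $p$th moment stays bounded below; decomposing $\hat x_0=\sum_{j}e_j\otimes y_j$ into rank-one tensors and applying the triangle and Cauchy--Schwarz inequalities together with $\lVert U_\sigma(t)e_j\rVert\le C$ forces some physical trajectory $x(\cdot\,;y_j)$ to have non-decaying moment. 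You instead prove the contrapositive at the level of operator norms: $\lVert U_\sigma(t)\rVert\le C$ from stability of $\Sigma_{d,W}$, the upgrade of \eqref{eq:def:pthmeanstability} tested on the basis vectors $e_k$ to $\mathbb{E}[\lVert V_r(t)\rVert^p]\le C'e^{-\beta t}$ by norm equivalence (a valid step, e.g.\ $\lVert V\rVert^p\le n^{p/2}\sum_k\lVert Ve_k\rVert^p$), and the multiplicativity \eqref{eq:ABAB} then make $\hat\Sigma$ exponentially $p$th mean stable with constants uniform in $z_0$, after which the moment ODE and Lemma~\ref{lemma} give Hurwitz stability of $\hat{\i T}$ exactly as in Proposition~\ref{prop:instability}. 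Your direction buys rigor at the paper's weakest point: you never need to convert ``not exponentially $p$th mean stable'' into the existence of a single trajectory with $\mathbb{E}[\lVert\hat x(t)\rVert^p]\ge 1$ for all $t\ge 0$, a step the paper asserts without justification, because in the contrapositive one only ever constructs exponential upper bounds. What the paper's direction buys is constructiveness: it exhibits an explicit initial condition $y_j$ of $\Sigma$ witnessing the instability. Your closing diagnosis is also accurate: since $\hat\Sigma$ need not be positive, Proposition~\ref{prop:ogura} cannot be invoked as an equivalence, and the soundness of the final step rests precisely on testing decay over the full spanning set supplied by Lemma~\ref{lemma} rather than on a possibly deficient subspace.
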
 

Let us see an example. 

\begin{example} \label{example:2}
Let us apply Theorem~\ref{thm:main} to the  Markov jump linear system considered in Example~\ref{example:1}. Let $p=1$ and set $W_1=A_2$ and $W_2=A_1$. Then, we see that $\mu (\hat{\i T})=0$ and, therefore, $\hat{\i T}$ is not Hurwitz stable. Hence, Theorem~\ref{thm:main} shows that the Markov jump linear system $\Sigma$ is not mean stable, which coincides with our analysis in Example~\ref{example:1}. 
\end{example}

We remark that, although Example~\ref{example:2} illustrates that the matrix weights~$W_i$ can significantly improve our ability to analyze the stability property of Markov jump linear systems, it is not a trivial problem to systematically choose appropriate weights. This issue is discussed in Section~\ref{sec:Stochastic}. 

\section{Proof} \label{sec:proof}

We present the proof of Theorem~\ref{thm:main} in this section. We start by showing the following preliminary result:   

\begin{proposition}\label{prop:instability}
Consider the Markov jump linear system~$\Sigma$ given in \eqref{eq:MJLS}. If the matrix~${\i T}$ defined in \eqref{matrix:main} is not Hurwitz stable, then $\Sigma$ is not exponentially $p$th mean stable. 
\end{proposition}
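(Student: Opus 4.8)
The plan is to prove the logically equivalent contrapositive: if $\Sigma$ is exponentially $p$th mean stable, then the matrix ${\i T}$ defined in \eqref{matrix:main} is Hurwitz stable. The central objects will be the conditional moment vectors
\begin{equation*}
f_i(t) = \mathbb{E}\big[\mathbf{1}_{\{r(t)=i\}}\, x(t)^{[p]}\big] \in \mathbb{R}^{n_p}, \quad i = 1, \dotsc, N,
\end{equation*}
stacked into $f(t) = (f_1(t)^\top, \dotsc, f_N(t)^\top)^\top \in \mathbb{R}^{N n_p}$.

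First I would derive that $f$ obeys the linear ordinary differential equation $df/dt = {\i T} f$; this is the heart of the argument. Using the infinitesimal generator of the piecewise-deterministic Markov process $(x(t), r(t))$ together with the defining property of the $p$th-power operator—namely that $dx/dt = A_i x$ forces $dx^{[p]}/dt = (A_i)_{[p]} x^{[p]}$—the continuous part of the dynamics contributes the block-diagonal term $\diag\big((A_1)_{[p]}, \dotsc, (A_N)_{[p]}\big)$, while the jumps of $r$ governed by the generator $Q$ contribute the coupling term $Q^\top \otimes I_{n_p}$. Summing these yields exactly ${\i T}$, and consequently $f(t) = e^{{\i T} t} f(0)$.

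Next I would turn the stability hypothesis into a decay estimate for $f$. Since $\rVert \mathbb{E}[\,\cdot\,] \rVert \le \mathbb{E}[\rVert \cdot \rVert]$ and $\rVert x^{[p]} \rVert = \rVert x \rVert^p$, each block satisfies $\rVert f_i(t) \rVert \le \mathbb{E}[\rVert x(t) \rVert^p] \le C e^{-\beta t} \rVert x_0 \rVert^p$, and hence $\rVert f(t) \rVert \le \sqrt{N}\, C e^{-\beta t} \rVert x_0 \rVert^p$. The final step transfers this into operator-norm decay of $e^{{\i T} t}$. Choosing the initial mode deterministically, $r(0)=i_0$, gives $f(0) = e_{i_0} \otimes x_0^{[p]}$ with $\rVert f(0) \rVert = \rVert x_0 \rVert^p$ by \eqref{eq:norm}, so that $\rVert e^{{\i T} t} (e_{i_0} \otimes x_0^{[p]}) \rVert \le \sqrt{N}\, C e^{-\beta t} \rVert e_{i_0} \otimes x_0^{[p]} \rVert$. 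By Lemma~\ref{lemma} the vectors $x_0^{[p]}$ span $\mathbb{R}^{n_p}$ as $x_0$ ranges over $\mathbb{R}^n$, so the vectors $e_{i_0}\otimes x_0^{[p]}$ span all of $\mathbb{R}^{N n_p}$; expanding an arbitrary vector in such a spanning family and applying linearity and the triangle inequality gives $\rVert e^{{\i T} t} \rVert \le M e^{-\beta t}$ for some constant $M$, whence ${\i T}$ is Hurwitz stable.

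I expect the main obstacle to be the rigorous derivation of the moment equation $df/dt = {\i T} f$: one must justify differentiating $\mathbb{E}\big[\mathbf{1}_{\{r(t)=i\}} x(t)^{[p]}\big]$ through the generator of the joint process and correctly account for the transition-rate contributions so that the off-diagonal couplings assemble into $Q^\top \otimes I_{n_p}$ rather than $Q \otimes I_{n_p}$. A secondary technical point is the passage from decay along the structured initial data $e_{i_0}\otimes x_0^{[p]}$ to full operator-norm decay, which relies essentially on the spanning property in Lemma~\ref{lemma}. I note that no positivity of the system enters the argument, which is precisely why only this implication—and not its converse—holds in the general, non-positive setting.
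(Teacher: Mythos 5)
Your proposal is correct and follows essentially the same route as the paper: your stacked conditional moment vector $f(t)$ is exactly the paper's $\mathbb{E}[\delta(t)\otimes x(t)^{[p]}]$ with $\delta(t)=e_{r(t)}$, and the argument proceeds identically via the contrapositive, the moment ODE $df/dt = {\i T}f$, the norm bound from \eqref{eq:norm}, and the spanning property of Lemma~\ref{lemma}. The only cosmetic difference is that the paper obtains the moment ODE by citing Proposition~5.3 of~\cite{Ogura2014} rather than rederiving it from the generator of the joint process, which is precisely the step you flagged as the main technical burden.
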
  

\begin{proof}
We prove the proposition by showing its contraposition. Assume that $\Sigma$ is exponentially $p$th mean stable. Define the stochastic process $\delta$ taking values in the set~$\{e_1, \dotsc, e_N\} \subset \mathbb{R}^N$ by
\begin{equation*}
    \delta(t)=e_{r(t)}
\end{equation*}
for all $t\geq 0$. Then, Proposition~5.3 in~\cite{Ogura2014} show that the differential equation 
\begin{equation*}
\frac{d}{dt}\mathbb{E}[\delta \otimes x^{[p]}] = {\i T} \mathbb{E}[\delta \otimes x^{[p]}]
\end{equation*}
holds true. From this differential equation, we obtain 
\begin{equation}\label{eq:proofProp}
e^{{\i T} t} \big(\delta_0 \otimes x_0^{[p]} \big) = \mathbb{E} [\delta(t) \otimes x(t)^{[p]}]
\end{equation}
for all $x_0 \in \mathbb{R}^n$ and $t \geq 0$. Since we have $\lVert \mathbb{E} [\delta(t) \otimes x(t)^{[p]}] \rVert \leq \mathbb{E} [\lVert \delta(t) \otimes x(t)^{[p]} \rVert ]= \mathbb{E}[\lVert x(t)\rVert^p]$ by equation \eqref{eq:norm}, taking the norm on the both hand sides of the equation~\eqref{eq:proofProp} shows that 
$\lVert e^{{\i T} t} (\delta_0 \otimes x_0^{[p]}) \rVert \leq \mathbb{E} [\lVert x(t)\rVert^p ]  \leq Ce^{-\beta t}\rVert x_0\rVert^p$ for some $C>0$ and $\beta>0$ by the exponential $p$th mean stability of~$\Sigma$. This inequality implies
\begin{equation}\label{eq:convergesTo0}
\lim_{t\to\infty}e^{{\i T} t} z = 0
\end{equation}
provided that $z$ is of the form $\delta_0 \otimes x_0^{[p]}$ for some $\delta_0 \in \{e_1, \dotsc, e_N\}$ and $x_0\in\mathbb{R}^n$. Since Lemma~\ref{lemma} shows that the set of such vectors spans the whole space~$\mathbb{R}^{Nn_p}$, we conclude that equation~\eqref{eq:convergesTo0} holds true for any $z\in\mathbb{R}^{Nn_p}$. This implies that the matrix~${\i T}$ is Hurwitz stable and completes the proof of the theorem. 
\end{proof}

Let us prove Theorem~\ref{thm:main}.

\begin{proof}[Proof of Theorem~\ref{thm:main}] \label{main:proof}
Assume that $\Sigma_{d, W}$ is stable. We consider the following auxiliary Markov jump linear system: 
\begin{equation}\label{eq:hatSigma}
\hat \Sigma : \frac{d\hat x}{dt} =  \big( W_{\sigma(t)} \oplus A_{\sigma(t)} \big) \hat x. 
\end{equation}
Let $\sigma$ be a sample path of the Markov process~$r$. Let $t\geq 0$ be arbitrary. Then, there exist positive numbers~$h_1$, \dots, $h_K$ as well as integers~$i_1$, \dots, $i_K$ in the set~$\{1, \dotsc, N\}$ such that 
\begin{equation}\label{eq:hatx=1}
\begin{aligned}
\hat x(t)&= e^{({W}_{i_1} \oplus A_{i_1})h_1}  \dotsm e^{({W}_{i_K} \oplus A_{i_K})h_K} \hat x(0) \\
&= \left[\prod_{f=1}^K e^{({W}_{i_f} \oplus A_{i_f})h_f} \right] \hat x(0).
\end{aligned}       
\end{equation}
Since
\begin{equation}
\begin{aligned}
(W_{i_f} \oplus A_{i_f})h_f &=(I_m \otimes A_{i_f}+W_{i_f} \otimes I_n)h_f \\
&=I_m \otimes (h_f A_{i_f})+(h_f W_{i_f}) \otimes I_n \\
&=(h_f A_{i_f}) \oplus (h_f W_{i_f}), 
\end{aligned} 
\end{equation}
for all $f=1, \dotsc, K$, equations~\eqref{eq:ABABAB} and \eqref{eq:hatx=1} show that 
\begin{equation}\label{eq:hatx=2}
\begin{aligned}
\hat x(t)
&=\left[\prod_{f=1}^{K} \biggl (e^{W_{i_f}h_f} \otimes e^{A_{i_f}h_f} \biggr)\right] \hat x(0)\\
&=\left[\biggl(\prod_{f=1}^{K} e^{W_{i_f}h_f}\biggr) \otimes \biggl(\prod_{f=1}^{K} e^{A_{i_f}h_f}\biggr)\right] \hat x(0),
\end{aligned}       
\end{equation}
where we used equation~\eqref{eq:ABCD} in the last equality. 

Since $\hat{\i T}$ is assumed to be not Hurwitz stable, Proposition~\ref{prop:instability} shows that $\hat \Sigma$ is not exponentially $p$th mean stable. Therefore, there exists a nonzero vector~$\hat x_0 \in {\mathbb R}^{mn}$ such that, if $\hat x(0)=\hat x_0$, then 
\begin{equation}\label{eq:||hat x||>=...}
{\mathbb{E}} [\rVert \hat x(t) \rVert^p] \geq 1
\end{equation}
for all $t\geq 0$. Let $e_1 \dots, e_m$ be the canonical basis of~${\mathbb R}^{m}$. Then, we can take vectors~$y_1, \dotsc, y_m \in {\mathbb{R}}^{n}$ such that 
\begin{equation}\label{eq:hatx_0=...}
\hat x_0 = e_1 \otimes y_1 + \cdots + e_m \otimes y_m. 
\end{equation}
From equations~\eqref{eq:hatx=2} and \eqref{eq:hatx_0=...}, we obtain 
\begin{equation}
\begin{aligned}
 \hat x(t) =& \sum_{j=1}^m \left[\left( \biggl (\prod_{f=1}^{K} e^{W_{i_f}h_f} \biggr) \otimes \biggl(\prod_{f=1}^{K} e^{A_{i_f}h_f} \biggr) \right)(e_j \otimes y_j)\right] \\
=&\sum_{j=1}^m \left[\left( \biggl (\prod_{f=1}^{K} e^{W_{i_f}h_f} \biggr )e_j \right) \otimes \left(\biggl (\prod_{f=1}^{K} e^{A_{i_f}h_f} \biggr)y_j \right) \right].
\end{aligned}
\end{equation}
Taking the norms in the both hand sides of this equation and using the triangle inequality, we obtain  
\begin{equation} \label{triangle:ineq}
\begin{aligned}
\rVert \hat x(t) \rVert ^p
& \leq
\left(\sum_{j=1}^m \bigg \rVert \bigg(\prod_{f=1}^{K} e^{W_{i_f}h_f} \bigg)e_j \bigg \rVert \bigg \rVert \bigg(\prod_{f=1}^{K} e^{A_{i_f}h_f} \bigg)y_j \bigg \rVert  \right)^p, 
\end{aligned}
\end{equation}
where we used equation~\eqref{eq:ABAB}. Because $\Sigma_{d, W}$ is stable, there exists $C >0$ such that $\lVert (\prod_{f=1}^{K} e^{W_{i_f}h_f} )e_j \rVert \leq C$ for all $j$. Therefore, applying the Cauchy-Schwartz Inequality
%~\cite{Steele2004} 
to the right-hand side of inequality~\eqref{triangle:ineq}, we obtain 
\begin{equation}\label{main:i}
\begin{aligned}
\rVert \hat x(t) \rVert ^p
& \leq 
\Bigg[\sum_{j=1}^m \bigg \rVert \bigg(\prod_{f=1}^{K} e^{W_{i_f}h_f} \bigg)e_j \bigg \rVert^p \bigg] 
\bigg[\sum_{j=1}^m \bigg \rVert \bigg(\prod_{f=1}^{K} e^{A_{i_f}h_f} \bigg)y_j \bigg \rVert^p \bigg] \\
& \leq
m C^p \sum_{j=1}^m \rVert x(t; y_j) \rVert^p,
\end{aligned}
\end{equation}
where $x(\cdot; y_j)$ denote the solution of the Markov jump linear system~$\Sigma$ with the initial condition~$x(0) = y_j$. 

Now, since the sample path~$\sigma$ was arbitrary, taking mathematical expectations in the inequality~\eqref{main:i} shows ${\mathbb{E}}[\rVert \hat x(t) \rVert^p] \leq m C^p  \sum_{j=1}^m  {\mathbb{E}} [\rVert x(t; y_j) \rVert^p]$.  From this inequality and \eqref{eq:||hat x||>=...}, we obtain $\sum_{j=1}^m {\mathbb{E}}[\rVert x(t; y_j)\rVert^p] \geq m^{-1} C^{-p}$. This inequality shows the existence of $j \in \{1,2,\dotsc,m\}$ such that ${\mathbb{E}}[\rVert x(t;y_j) \rVert^p]$ does not converge to $0$ as $t$ tends to $\infty$. Therefore, $\Sigma$ is not exponentially $p$th mean stable and this completes the proof of the theorem.
\end{proof}

\section{Monotonicity} \label{sec:weight}

In order to fully benefit from the introduction of the matrix weights~$W_1$, \dots, $W_N$ in the matrix~$\hat{\i T}$, we should be able to find the weights that maximize the spectral abscissa of~$\hat{\i T}$, i.e., the matrix weights solving the following optimization problem:
\begin{equation}
\begin{aligned}
\maximize_{W_1,\,\dotsc,\,W_N \in \mathbb{R}^{m\times m}} \quad & \mu(\hat{\i T})
\\
\text{subject to\ \ \ \,} \quad & \mbox{$\Sigma_{d, W}$ is stable.}
\end{aligned}
\end{equation}
Let us denote the solution of the optimization problem by $\i U_m$, i.e., define  
\begin{equation}\label{eq:def:U_m}
{\i{U}}_m = \sup_{W_1,\,\dotsc,\,W_N\in \mathbb{R}^{m\times m}} \big \{
\mu(\hat {\i T}) \colon \mbox{$\Sigma_{d, W}$ is stable} \big \}, 
\end{equation}
where we place the subscript~$m$ to emphasize that the quantity depends on the order~$m$ of the matrix weights. Then, the following corollary immediately follows from Theorem~\ref{thm:main}. 

\begin{corollary}\label{cor:}
If there exists a positive integer~$m$ such that $\i U_m\geq 0$, then the Markov jump linear system~$\Sigma$ is not exponentially $p$th mean stable. 
\end{corollary}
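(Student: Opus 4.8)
The plan is to deduce the corollary from Theorem~\ref{thm:main} by producing admissible matrix weights for which $\hat{\i T}$ fails to be Hurwitz stable, and then to settle the delicate boundary case by a contradiction argument that keeps track of the exponential decay rate. First I would dispose of the easy regime. If $\i U_m > 0$, then by the definition of the supremum in~\eqref{eq:def:U_m} there exist weights $W_1, \dots, W_N \in \mathbb{R}^{m\times m}$ such that $\Sigma_{d,W}$ is stable and $\mu(\hat{\i T}) > 0$, so $\hat{\i T}$ is not Hurwitz stable and Theorem~\ref{thm:main} gives at once that $\Sigma$ is not exponentially $p$th mean stable. The same reasoning applies verbatim whenever the supremum in~\eqref{eq:def:U_m} is attained, since any maximizer realizes $\mu(\hat{\i T}) = \i U_m \geq 0$. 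This is the sense in which the statement "immediately follows" from Theorem~\ref{thm:main}.

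The one configuration not covered by this direct argument is $\i U_m = 0$ together with non-attainment of the supremum: here every admissible $W$ satisfies $\mu(\hat{\i T}) < 0$, so Theorem~\ref{thm:main} applied to any single choice of weights is inconclusive, even though $\mu(\hat{\i T})$ can be pushed arbitrarily close to $0$. I expect this non-attainment at the boundary to be the main obstacle. To remove it — and, in fact, to prove the corollary uniformly in all cases — I would prove the contrapositive while retaining the rate. Assume $\Sigma$ is exponentially $p$th mean stable, so that $\mathbb{E}[\rVert x(t)\rVert^p] \leq C e^{-\beta t}\rVert x_0\rVert^p$ for some $C, \beta > 0$; the goal is to establish $\i U_m \leq -\beta < 0$, which contradicts $\i U_m \geq 0$.

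To obtain such a uniform gap I would revisit the proof of Theorem~\ref{thm:main} and keep the rate rather than only its qualitative conclusion. Fixing any admissible $W$ and any sample path, inequality~\eqref{main:i} reads $\rVert \hat x(t)\rVert^p \leq m C_W^p \sum_{j=1}^m \rVert x(t; y_j)\rVert^p$, where the constant $C_W$ arises from the stability of $\Sigma_{d,W}$ and is uniform over sample paths. Taking expectations and inserting the exponential bound on $\mathbb{E}[\rVert x(t; y_j)\rVert^p]$ shows that the auxiliary system $\hat\Sigma$ is itself exponentially $p$th mean stable with the \emph{same} rate $\beta$, only the multiplicative constant depending on $W$ (through $C_W$) and on $\hat x_0$ via the decomposition~\eqref{eq:hatx_0=...}. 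Feeding this back through the identity~\eqref{eq:proofProp} of Proposition~\ref{prop:instability}, together with~\eqref{eq:norm} and Lemma~\ref{lemma}, yields $\rVert e^{\hat{\i T} t} z\rVert \leq K_z\, e^{-\beta t}$ for every $z$, which forces $\mu(\hat{\i T}) \leq -\beta$. Since $\beta$ is the decay rate of $\Sigma$ and is independent of the chosen weights, this bound holds uniformly over all admissible $W$, whence $\i U_m \leq -\beta < 0$, the sought contradiction.

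The crux, and the step I would treat most carefully, is the claim that the \emph{rate} $\beta$ transfers from $\Sigma$ to $\hat\Sigma$ independently of $W$: the stability constant $C_W$ of $\Sigma_{d,W}$ is genuinely $W$-dependent and need not stay bounded across admissible weights, so the argument must isolate $\beta$ from these constants and confine all $W$-dependence to the prefactor. Once that separation is made explicit, the implication $\mathbb{E}[\rVert \hat x(t)\rVert^p] \lesssim e^{-\beta t} \Rightarrow \mu(\hat{\i T}) \leq -\beta$ is exactly the link between $p$th mean decay and the spectral abscissa already exploited in the proof of Proposition~\ref{prop:instability}, and the corollary follows.
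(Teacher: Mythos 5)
Your proof is correct, and it does genuinely more than the paper, which offers no proof at all: the paper simply asserts that the corollary ``immediately follows from Theorem~\ref{thm:main}.'' That one-line deduction is exactly your easy regime --- it is valid when $\i U_m>0$ or when the supremum in \eqref{eq:def:U_m} is attained, since then some admissible weights witness $\mu(\hat{\i T})\geq 0$ and the theorem applies. You are right that the remaining configuration ($\i U_m=0$ with the supremum not attained) is not covered by a direct appeal to the theorem: the contrapositive of Theorem~\ref{thm:main} only yields $\mu(\hat{\i T})<0$ for each admissible $W$, hence only $\i U_m\leq 0$, not the strict inequality the corollary needs. Your repair is sound and I checked the step you flagged as the crux: in inequality \eqref{main:i} the only $W$-dependence is the prefactor $mC_W^p$, while the decay estimate $\mathbb{E}[\lVert x(t;y_j)\rVert^p]\leq Ce^{-\beta t}\lVert y_j\rVert^p$ comes from $\Sigma$ alone; since the vectors $e_j\otimes y_j$ in \eqref{eq:hatx_0=...} are orthogonal, $\sum_j\lVert y_j\rVert^p\leq m\lVert \hat x_0\rVert^p$, so $\hat\Sigma$ inherits the rate $\beta$ with a $W$-dependent constant. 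Feeding this through the identity \eqref{eq:proofProp} and Lemma~\ref{lemma} gives $\lVert e^{\hat{\i T}t}\rVert\leq Ke^{-\beta t}$, and comparing with the spectral radius bound $e^{\mu(\hat{\i T})t}\leq\lVert e^{\hat{\i T}t}\rVert$ forces $\mu(\hat{\i T})\leq-\beta$ for \emph{every} admissible $W$, hence $\i U_m\leq-\beta<0$, which is the required contradiction. In short: the paper's approach buys brevity and is adequate for the way the corollary is used in practice (Section~\ref{sec:Stochastic} always produces an explicit witness with $\mu\geq 0$), whereas your rate-tracking contrapositive is what actually establishes the corollary as stated, including at the boundary of the supremum; it even yields the stronger quantitative statement that exponential $p$th mean stability with rate $\beta$ implies $\i U_m\leq-\beta$.
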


In this section, we prove the following monotonicity result. Specifically, the following proposition shows that, the larger the order of the matrix weights, the tighter our instability analysis. 

\begin{proposition} \label{prop:monotonicity}
Let $m$ and  $m'$ be positive integers. If $m<m'$, then ${\i{U}}_m \leq {\i{U}}_{m'}$. 
\end{proposition}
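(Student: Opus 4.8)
The plan is to prove the inequality by an explicit \emph{embedding} argument: given any feasible $m\times m$ weights I will build feasible $m'\times m'$ weights whose matrix $\hat{\i T}$ has a spectral abscissa at least as large, and then pass to the supremum in \eqref{eq:def:U_m}. Concretely, fix $W_1,\dots,W_N\in\mathbb{R}^{m\times m}$ for which $\Sigma_{d,W}$ is stable, and define the padded weights $W_i' = \diag(W_i,\,0_{(m'-m)\times(m'-m)})\in\mathbb{R}^{m'\times m'}$ for $i=1,\dots,N$. The first step is feasibility: for any switching signal the product $\prod_f e^{W_{i_f}'h_f}$ is block diagonal with top block $\prod_f e^{W_{i_f}h_f}$ and bottom block $I_{m'-m}$, so its norm is bounded by $\max(C,1)$, where $C$ is the bound guaranteed by stability of $\Sigma_{d,W}$. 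Hence $\Sigma_{d,W'}$ is stable and the new weights are admissible in \eqref{eq:def:U_m}.

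The second step is a structural computation. Using $W_i'\oplus A_i = W_i'\otimes I_n + I_{m'}\otimes A_i$ together with the block structure of $W_i'$, I would show that $W_i'\oplus A_i = \diag(W_i\oplus A_i,\; 0\oplus A_i)$ is block diagonal, of block sizes $mn$ and $(m'-m)n$. The crux is then the third step: understanding how the $[p]$ operation interacts with this block structure. I would grade the degree-$p$ monomials in the $m'n$ variables by bidegree $(a,b)$ with $a+b=p$, where $a$ counts the degree in the first $mn$ variables and $b$ the degree in the remaining $(m'-m)n$ variables. For a block-diagonal generator $\diag(B,C)$ the linear flow decouples the two groups of variables, so the time derivative of any degree-$p$ monomial preserves its bidegree; consequently $(\diag(B,C))_{[p]}$ is block diagonal with respect to this grading, and its $(p,0)$ diagonal block---the action on monomials built only from the first group---is exactly $B_{[p]}$. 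Applying this with $B = W_i\oplus A_i$ shows that $(W_i'\oplus A_i)_{[p]}$ is block diagonal and carries $(W_i\oplus A_i)_{[p]}$ in its $(p,0)$ corner.

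The fourth step is assembly. Writing $\hat{\i T}' = Q^\top\otimes I + \bigoplus_{i=1}^N (W_i'\oplus A_i)_{[p]}$ as in \eqref{eq:hattau1}, I observe that $Q^\top\otimes I$ and each $(W_i'\oplus A_i)_{[p]}$ respect the \emph{same} bidegree grading, so $\hat{\i T}'$ is block diagonal across bidegrees. Its $(p,0)$ block is precisely $Q^\top\otimes I + \bigoplus_{i=1}^N (W_i\oplus A_i)_{[p]}$, which is the matrix $\hat{\i T}$ associated with the original $m\times m$ weights. Since the spectral abscissa of a block-diagonal matrix is the maximum of the spectral abscissae of its blocks, $\mu(\hat{\i T}')\ge\mu(\hat{\i T})$. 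Taking the supremum over all feasible $W_1,\dots,W_N$ and recalling the definition of $\i U_m$ then yields $\i U_{m'}\ge\i U_m$.

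The main obstacle is the third step, namely justifying that $(\diag(B,C))_{[p]}$ is block diagonal with a $B_{[p]}$ corner. I would anchor this in the defining property of the $[p]$ map: writing $\dot u = Bu$ and $\dot w = Cw$, the vector $u^{[p]}$ satisfies $\dot{(u^{[p]})}=B_{[p]}u^{[p]}$ and its entries are exactly the pure-$u$ degree-$p$ monomials, so the corresponding coordinate subspace is invariant and carries $B_{[p]}$, while the complementary bidegree subspaces are invariant by the same decoupling. The delicate point, where I expect to be most careful, is to confirm that the lexicographic ordering and the norm-preserving coefficients of the full monomial vector $\binom{u}{w}^{[p]}$ restrict consistently to the pure-$u$ monomials, so that the $(p,0)$ block is genuinely $B_{[p]}$ and not merely similar to it.
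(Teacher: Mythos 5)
Your proposal is correct and takes essentially the same route as the paper: pad the weights with a zero block so that $W_i'\oplus A_i=\diag(W_i\oplus A_i,\,I_{m'-m}\otimes A_i)$, show that the $[p]$-lift of a block-diagonal matrix leaves the pure-first-block monomials invariant and acts on them exactly as $(W_i\oplus A_i)_{[p]}$, and conclude from feasibility of the padded weights and the fact that the spectral abscissa of a block-diagonal matrix is the maximum over its blocks. The only difference is cosmetic: the paper's Lemma~\ref{eq:diag[p]lemma} splits the monomials into just two invariant groups (pure-$x$ versus containing at least one $y$-variable), whereas you refine this into a full bidegree grading, and your flagged ``delicate point'' about coefficient normalization is harmless since even similarity of the $(p,0)$ block to $B_{[p]}$ would suffice for the spectral-abscissa comparison.
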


For the proof of this proposition, we present the following lemma. 

\begin{lemma}\label{eq:diag[p]lemma}
Let $A$ and $B$ be $n\times n$ and $m\times m$ square matrices. Define $N=n+m$ and let $p$ be a positive integer. Then, there exist a square matrix~$X$ of order~$N_p - n_p$ and a permutation matrix $\Gamma$ of order $N_p$ such that 
$\Gamma (\diag(A, B))_{[p]} \Gamma^{-1} = \diag(A_{[p]}, X)$.
\end{lemma}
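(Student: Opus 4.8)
The plan is to interpret the operator $(\cdot)_{[p]}$ through its defining dynamical property and to exploit the natural grading of the degree-$p$ monomials by their degree in each of the two blocks of variables. Write $C = \diag(A,B)$ and split a vector $x \in \mathbb{R}^N$ as $x = (u,v)$ with $u \in \mathbb{R}^n$ and $v \in \mathbb{R}^m$, so that $dx/dt = Cx$ decouples into $du/dt = Au$ and $dv/dt = Bv$. For each $k$ with $0 \leq k \leq p$, let $V_k \subseteq \mathbb{R}^{N_p}$ be the span of those entries of $x^{[p]}$ that are monomials of degree exactly $k$ in the $u$-variables (and hence degree $p-k$ in the $v$-variables). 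Grouping the entries of $x^{[p]}$ by this bidegree gives a direct-sum decomposition $\mathbb{R}^{N_p} = V_0 \oplus \cdots \oplus V_p$, with $\dim V_k = n_k\, m_{p-k}$ and $\dim V_p = n_p$.

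The first step is to show that each $V_k$ is invariant under $C_{[p]}$. This follows because block-diagonal dynamics preserve the bidegree of a monomial: differentiating a $u$-factor via $du_i/dt = (Au)_i$ produces a linear combination of $u$-variables only, and differentiating a $v$-factor via $dv_j/dt = (Bv)_j$ produces $v$-variables only. Hence $d/dt$ sends any monomial of bidegree $(k,p-k)$ to a combination of monomials of the same bidegree, and since $C_{[p]}$ is the matrix of this derivative action in the $x^{[p]}$-basis (the fixed normalizing coefficients only rescale within each $V_k$), we get $C_{[p]} V_k \subseteq V_k$. In particular, both $V_p$ and the complement $V_0 \oplus \cdots \oplus V_{p-1}$ are invariant; it is the invariance of the complement, not merely of $V_p$, that will eventually produce a block-\emph{diagonal} rather than a merely block-triangular form.

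The second step is to identify the action of $C_{[p]}$ on $V_p$ with $A_{[p]}$. I would take the solution with $v(0)=0$ and $u(0)=u_0$ arbitrary; since $dv/dt = Bv$ forces $v(t)\equiv 0$, every entry of $x(t)^{[p]}$ involving a $v$-variable vanishes, while the pure-$u$ entries coincide exactly with those of $u(t)^{[p]}$. This coincidence is exact because the normalizing coefficient attached to a pure-$u$ monomial in the $N$-variable construction is the multinomial coefficient $\sqrt{\binom{p}{\beta}}$, which equals the coefficient used for the same monomial in the $n$-variable construction of $u^{[p]}$. Since $du/dt = Au$ gives $du^{[p]}/dt = A_{[p]} u^{[p]}$ and the vectors $u^{[p]}$ span $V_p$ by Lemma~\ref{lemma} as $u_0$ ranges over $\mathbb{R}^n$, the restriction of $C_{[p]}$ to $V_p$ is exactly $A_{[p]}$.

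Finally, let $\Gamma$ be the permutation matrix of order $N_p$ that reorders the lexicographically indexed entries of $x^{[p]}$ so that those in $V_p$ come first. Because both $V_p$ and its complement are $C_{[p]}$-invariant, conjugation by $\Gamma$ block-diagonalizes, yielding $\Gamma\,(\diag(A,B))_{[p]}\,\Gamma^{-1} = \diag(A_{[p]}, X)$, where $X$ is the matrix of the action of $C_{[p]}$ on $V_0 \oplus \cdots \oplus V_{p-1}$, of order $N_p - \dim V_p = N_p - n_p$. I expect the main obstacle to be the coefficient-matching in the third step: it is exactly what upgrades the conclusion from ``$C_{[p]}$ restricted to $V_p$ is \emph{similar} to $A_{[p]}$'' to ``equals $A_{[p]}$,'' and it must be read off carefully from the explicit multinomial normalization in the definition of $x^{[p]}$.
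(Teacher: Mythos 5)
Your proposal is correct and follows essentially the same route as the paper's proof: both exploit the defining dynamical property of $(\cdot)_{[p]}$, the fact that block-diagonal dynamics cannot mix pure-$u$ monomials with monomials containing a $v$-factor, and a permutation putting the pure-$u$ entries first, the paper phrasing this as $\Gamma\bigl[\begin{smallmatrix}x\\y\end{smallmatrix}\bigr]^{[p]}=\bigl[\begin{smallmatrix}x^{[p]}\\ \zeta(x,y)\end{smallmatrix}\bigr]$ with $d\zeta/dt=X\zeta$ and then comparing the two resulting differential equations. Your version merely refines the paper's two-block split into the full bidegree grading $V_0\oplus\cdots\oplus V_p$ and spells out the multinomial coefficient matching that the paper leaves implicit in asserting that the pure-$x$ entries of $\bigl[\begin{smallmatrix}x\\y\end{smallmatrix}\bigr]^{[p]}$ are exactly those of $x^{[p]}$.
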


\begin{proof}
We notice that there exists an $N_p \times N_p$ permutation matrix $\Gamma$ and a function $\zeta \colon \mathbb{R}^n \times  \mathbb{R}^m \to \mathbb{R}^{N_p-n_p}$ such that 
\begin{equation}
\Gamma \begin{bmatrix}
x \\ y 
\end{bmatrix}^{[p]} = \begin{bmatrix}
x^{[p]} \\ \zeta(x, y)
\end{bmatrix}
\end{equation}
for all $x\in\mathbb{R}^n$ and $y\in\mathbb{R}^m$. Now, assume that $x$ and $y$ are vectorial functions and satisfy the differential equations
\begin{equation}\label{eq:dxdt=ax}
\frac{dx}{dt} = Ax    
\end{equation}
and $dy/dt = By$, respectively. Since 
\begin{equation}
\frac{d}{dt}
\begin{bmatrix}
x\\y
\end{bmatrix} = 
\diag(A, B)
\begin{bmatrix}
x\\y
\end{bmatrix}, 
\end{equation}
we have
\begin{equation}\label{eq:nckoi3}
\frac{d}{dt}
\begin{bmatrix}
x\\y
\end{bmatrix}^{[p]}
=
\diag(A, B)_{[p]}
 \begin{bmatrix}
x \\ y
\end{bmatrix}^{[p]}. 
\end{equation}
Multiplying the permutation matrix~$\Gamma$ from the left to this differential equation, we obtain 
\begin{equation}\label{eq:nckoi4}
\frac{d}{dt}
\begin{bmatrix}
x^{[p]}\\ \zeta(x, y)
\end{bmatrix}=
\Gamma
\diag(A, B)_{[p]} \Gamma^{-1}
\begin{bmatrix}
x^{[p]}\\ \zeta(x, y)
\end{bmatrix}. 
\end{equation}

On the other hand, since $x$ satisfies the differential equation~\eqref{eq:dxdt=ax}, we obtain $dx^{[p]}/dt = A_{[p]} x^{[p]}$. Also, since any element of $\zeta(x, y)$ contains at least one element from~$y$, any element of the derivative $d\zeta(x, y)/dt$ must be either zero or contain at least one element from~$y$. Hence, there exists a matrix~$X$ such that $d\zeta(x, y)/dt = X\zeta(x, y)$. From the above differential equations, we obtain 
\begin{equation}\label{eq:nckoi5}
\frac{d}{dt}
\begin{bmatrix}
x^{[p]}\\ \zeta(x, y)
\end{bmatrix}
=
\diag(A_{[p]}, X)
\begin{bmatrix}
x^{[p]}\\ \zeta(x, y)
\end{bmatrix}. 
\end{equation}
Comparing this differential equation and \eqref{eq:nckoi4} completes the proof of the lemma. 
\end{proof}

Let us prove Proposition~\ref{prop:monotonicity}. 

\begin{proof}[Proof of Proposition~\ref{prop:monotonicity}]
Let $W_1$, \dots $W_N \in \mathbb{R}^{m\times m}$ be arbitrary. Define the $m'\times m'$ matrix~$W_i'=\diag(W_i,\,O_{m'-m})$ and let $\hat {\i T}' = Q^\top \otimes I_{(nm')_p} + \bigoplus_{i=1}^N (W_i' \oplus A_i)_{[p]}$. We first claim that the inequality
\begin{equation}\label{eq:firstT'Tineq}
\mu(\hat {\i T}')\geq \mu(\hat {\i T})
\end{equation}  
holds true. Let us prove inequality~\eqref{eq:firstT'Tineq}. Since $W_i' \oplus A_i = \diag(W_i \oplus A_i, I\otimes A_i)$, we have 
\begin{equation}\label{eq:gammatgamma}
\begin{aligned}
&(I_N\otimes \Gamma)\hat {\i T}'(I_N\otimes \Gamma)^{-1}
\\=&    
(I_N Q^\top I_N) \otimes (\Gamma I_{(nm')_p}\Gamma^{-1}) + \bigoplus_{i=1}^N \big( \Gamma(W_i' \oplus A_i)_{[p]}\Gamma^{-1} \big)
\\
=&
Q^\top \otimes  I_{(nm')_p}  + \bigoplus_{i=1}^N 
\big( (W_i \oplus A_i)_{[p]}, X_i \big)
\end{aligned}
\end{equation}
for some square matrices $X_1$, \dots, $X_N$ by Lemma~\ref{eq:diag[p]lemma}. By permuting the rows and columns of the most right hand side of \eqref{eq:gammatgamma}, we see that the matrix~$(I_N\otimes \Gamma)\hat {\i T}'(I_N\otimes \Gamma)^{-1}$ is similar to 
\begin{equation}
\begin{bmatrix}
Q^\top \otimes I+\bigoplus_{i=1}^N \big( W_i \oplus A_i \big)_{[p]}  &O   \\
O & Q^\top \otimes I+\bigoplus_{i=1}^N X_i \end{bmatrix}. 
\end{equation}
Therefore, the matrix $\hat{\i T}'$ is similar to the block diagonal matrix~$\diag(\hat{\i T}, Q^\top \otimes I+\bigoplus_{i=1}^N X_i)$. Hence, we can prove the inequality~\eqref{eq:firstT'Tineq} as 
\begin{equation}
\mu(\hat{\i T}') = \max\left(\mu(\hat{\i T}), \mu\left(Q^\top \otimes I+\bigoplus_{i=1}^N X_i\right)\right) \geq \mu(\hat{\i T}). 
\end{equation}

Now, assume that the switched linear system~$\Sigma_{d, W}$ is stable. Then, the switched linear system~$\Sigma_{d, W'}$ is stable as well by the definition of the matrices~$W_1'$, \dots, $W_N'$. Therefore, the inequality ${\i{U}}_{m'} \geq \mu(\hat {\i T}')$ holds true. This inequality and \eqref{eq:firstT'Tineq} shows ${\i{U}}_{m'} \geq \mu(\hat {\i T})$. Taking the supremum with respect to the set of matrix weights $W_1$, \dots, $W_N$ on the right hand side of this inequality, we obtain the desired inequality, as required. 
\end{proof}

Before closing this section, we give a remark on using complex matrix weights.  

\begin{remark}
Let $p=1$. For complex $m\times m$ matrices $V_1$, \dots, $V_N$, let 
$\hat{\i S} = Q^\top \otimes I_{mn}+\diag(V_1 \oplus A_1,\dotsc,V_N \oplus A_N)$
and define 
\begin{equation}\label{eq:def:UC_m}
{\i{U}}_{\mathbb{C}, m} = \sup_{V_1,\,\dotsc,\,V_N\in \mathbb{C}^{m\times m}} \big \{
\mu(\hat {\i S}) \colon \mbox{$\Sigma_{d, V}$ is stable} \big \}. 
\end{equation}
We claim that
\begin{equation}\label{eq:ucmu2m}
 {\i{U}}_{2m} \geq {\i{U}}_{\mathbb{C}, m}     
\end{equation}
holds true for all $m$. This inequality implies that using the complex weights dose not essentially improve our ability for analyzing the stability property of Markov jump linear system. To prove the inequality~\eqref{eq:ucmu2m}, let $\Re V_i$ and $\Im V_i$ denote the real and imaginary part of $V_i$. Consider the $(2m)\times (2m)$ real matrix weight $W_i$ defined by 
\begin{equation}
W_i =\begin{bmatrix}
\Re V_i& -\Im V_i   \\
\Im V_i& \Re V_i
\end{bmatrix}. 
\end{equation}
Then, we have  
\begin{equation}
\begin{multlined}[.9\linewidth]
\hat{\i T}= 
Q^\top \otimes I_{2nm}+\\\bigoplus_{i=1}^N \Bigg( \begin{bmatrix}
I_m \otimes A_i&O   \\
O&I_m \otimes A_i \end{bmatrix} +\begin{bmatrix}
\Re(W_i) \otimes I_n &-\Im(W_i) \otimes I_n  \\
\Im(W_i) \otimes I_n &\Re(W_i) \otimes I_n \end{bmatrix} \Bigg).
\end{multlined}
\end{equation}
Therefore, $\hat{\i T}$ is similar to the matrix
\begin{equation}\label{eq:thematrix}
\begin{bmatrix}
Q^\top \otimes I_{mn} + \diag(\Re V_i \oplus A_i) & 
-\diag(\Im V_i \oplus A_i)\\
\diag(\Im V_i \oplus A_i)
&
Q^\top \otimes I_{mn} + \diag(\Re V_i \oplus A_i)
\end{bmatrix}
\end{equation}
under a permutation transformation. Furthermore, the set of eigenvalues of the matrix~\eqref{eq:thematrix} contains those of the matrix $Q^\top \otimes I_{mn} + \diag(\Re V_i \oplus A_i) + i\diag(\Im V_i \oplus A_i) = Q^\top \otimes I_{mn} + \diag(V_i \oplus A_i) = \hat{\i S}$. Therefore, we have $\mu(\hat{\i T}) \geq \mu(\hat{\i S})$. This inequality implies 
\begin{equation} \label{ineq:2m:m}
\sup_{W_1,\,\dotsc,\,W_N \in \mathbb{R}^{2m\times 2m}}\{\mu(\hat{\i T})\colon \mbox{$\Sigma_{d, W}$ is stable}\} 
\geq 
\i U_{\mathbb{C}, m}.  
\end{equation}
Hence, to prove inequality~\eqref{eq:ucmu2m}, it is sufficient to show that the stability of $\Sigma_{d, W}$ implies the stability of $\Sigma_{d, V}$. Assume that $\Sigma_{d, W}$ is stable and let us consider the deterministic switched linear system $\Sigma_{d, V} \colon dy/dt=V_{\sigma(t)}y(t)$. Then, we can show that the trajectory~$y$ of the system~$\Sigma_{d, V}$ satisfies the differnetial equation 
\begin{equation}
    \frac{d}{dt}\begin{bmatrix} \Re y \\ \Im y \end{bmatrix} = \begin{bmatrix} \Re \big( V_{\sigma(t)}y \big) \\ \Im \big( V_{\sigma(t)}y \big) \end{bmatrix} = W_{\sigma(t)} \begin{bmatrix} \Re y \\ \Im y \end{bmatrix}.
\end{equation}
Therefore, by the stability of $\Sigma_{d, W}$, we see that there exists a constant $C' > 0$ such that the inequality
\begin{equation}
\lVert y(t) \rVert = \left\lVert \begin{bmatrix} \Re y(t) \\ \Im y(t) \end{bmatrix}  \right\rVert \leq C'
\left\lVert \begin{bmatrix} \Re y(0) \\ \Im y(0) \end{bmatrix}  \right\rVert = C' \lVert y(0) \rVert
\end{equation}
holds true for all $t\geq 0$ and $y(0) = y_0$. This inequality implies the stability of~$\Sigma_{d, V}$, as desired. 
\end{remark}

\section{Spectral optimization} \label{sec:Stochastic}

Although the introduction of the matrix weights $W_i$ allows us to drastically improve our ability to analyze the stability property of Markov jump linear systems as illustrated in Examples~\ref{example:1} and~\ref{example:2}, it is not necessarily trivial to compute the matrix weights achieving the supremum in \eqref{eq:def:U_m}. The first reason is the nonconvexity of the function $\mu$. The other reason is that the set~$\{ W_1, \, \dotsc,\, W_N \colon \mbox{$\Sigma_{d, W}$ is stable}\} \subset \mathbb{R}^{m\times m}$ does not admit a convenient characterization due to the NP-hardness~\cite{Gurvits2009} of checking the stability of deterministic switched linear systems. For these reasons, in this paper, we introduce an alternative quantity by confining our focus on the skew-symmetric matrix weights. Since the set of skew-symmetric matrices of the order, say, $m$, is isomorphic to~$\mathbb{R}^{m(m-1)/2}$, we can employ various optimization techniques for computing the quantity.

Specifically, we define the quantity
\begin{equation}\label{eq:def:V_m}
{\i{V}}_m = \sup_{W_1,\,\dotsc,\,W_N\in \mathbb{R}^{m\times m}}  \{
\mu(\hat {\i T}) \colon \mbox{$W_1$, \dots, $W_N$ are skew-symmetric}\}. 
\end{equation}
Since the matrix exponential of a skew-symmetric matrix is unitary, the deterministic switched linear system $\Sigma_{d, W}$ is stable if $W_1$, \dots, $W_N$ are skew-symmetric. Therefore, the inequality 
\begin{equation}
{\i{V}}_m \leq {\i{U}}_m
\end{equation}
holds true. This inequality and Corollary~\ref{cor:} yield the following instability criterion.   

\begin{corollary}\label{cor:skewsymmetric}
If there exists a positive integer~$m$ such that $\i V_m\geq 0$, then the Markov jump linear system~$\Sigma$ is not exponentially $p$th mean stable. 
\end{corollary}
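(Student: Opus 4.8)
The plan is to obtain the statement as an immediate consequence of the previously established criterion in Corollary~\ref{cor:}, combined with the ordering $\i V_m \le \i U_m$ recorded above. The essential observation is that the feasible set in the supremum defining $\i V_m$ is contained in the feasible set defining $\i U_m$, so the supremum over the smaller set cannot exceed the supremum over the larger one; once this is in place, the nonnegativity of $\i V_m$ forces the nonnegativity of $\i U_m$ and the criterion of Corollary~\ref{cor:} applies verbatim.

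First I would establish the inclusion of feasible sets by checking that skew-symmetry of the weights is a sufficient condition for the stability of $\Sigma_{d,W}$. Fix skew-symmetric matrices $W_1, \dots, W_N \in \mathbb{R}^{m\times m}$. Since $W_i^\top = -W_i$, each matrix exponential $e^{W_i t}$ is orthogonal and hence satisfies $\lVert e^{W_i t} \rVert = 1$ for all $t \ge 0$. For an arbitrary switching signal $\sigma$, the solution $y$ of $\Sigma_{d,W}$ is a product of such orthogonal factors applied to $y_0$, so $\lVert y(t) \rVert = \lVert y_0 \rVert$ for every $t \ge 0$; taking $C = 1$ in \eqref{eq:stableDeterministicSwitchedSystem} shows that $\Sigma_{d,W}$ is stable. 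Thus every tuple of skew-symmetric weights belongs to the constraint set $\{W_1, \dots, W_N \colon \mbox{$\Sigma_{d,W}$ is stable}\}$ appearing in \eqref{eq:def:U_m}.

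Next I would pass to the suprema. Because the supremum defining $\i V_m$ in \eqref{eq:def:V_m} ranges over a subset of the tuples admissible in the supremum defining $\i U_m$ in \eqref{eq:def:U_m}, the inequality $\i V_m \le \i U_m$ follows. Consequently, if there is a positive integer $m$ with $\i V_m \ge 0$, then for that same $m$ we have $\i U_m \ge \i V_m \ge 0$, and applying Corollary~\ref{cor:} to this $m$ gives that $\Sigma$ is not exponentially $p$th mean stable, which is precisely the claim.

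I expect no genuine obstacle in this argument, as the corollary is essentially bookkeeping built on the feasibility inclusion. The only point deserving care is verifying that skew-symmetry really yields stability of $\Sigma_{d,W}$ uniformly over all switching signals $\sigma$, rather than merely for a fixed one; this is immediate here because the norm identity $\lVert y(t) \rVert = \lVert y_0 \rVert$ holds with the same constant $C = 1$ regardless of $\sigma$, so no interaction between the switching law and the weights can spoil the bound.
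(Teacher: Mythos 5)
Your proposal is correct and follows essentially the same route as the paper: skew-symmetry makes each $e^{W_i t}$ orthogonal, hence $\Sigma_{d,W}$ is stable, which gives the feasible-set inclusion and the inequality ${\i V}_m \leq {\i U}_m$, after which Corollary~\ref{cor:} applies. Your explicit verification that the norm identity holds uniformly over all switching signals is a slightly more detailed write-up of the same observation the paper makes in one line.
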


In the rest of this section, we present a procedure for finding the locally optimal matrix weights $W_1$, \dots, $W_N$ for approximately computing the quantity~$\i V_m$. Let us parametrize the skew-symmetric matrix weights as 
\begin{equation}
W_i = \sum_{\alpha=2}^m \sum_{\beta<\alpha} w_{\alpha \beta}^i R_{\alpha \beta},
\end{equation}
where $w_{\alpha \beta}^i$ is a real variable and $R_{\alpha \beta}$ denotes the $m\times m$ matrix whose entries are all zero except its $(\alpha ,\beta)$th and $(\beta, \alpha)$th entries being $+1$ and $-1$, respectively. We let $E_{ii}$ be the $N \times N$ matrix whose elements are all zero except its $(i, i)$th entry being one. Under this parametrization, we can express the matrix~$\hat{\i T}$ as
\begin{equation}\label{eq;it}
\begin{aligned}
\hat{\i T}
&= Q^\top \otimes I + \bigoplus_{i=1}^N \Big( (I_m \otimes A_i)_{[p]}+ (W_i \otimes I_n)_{[p]} \Big)\\
&= 
\begin{multlined}[t]
Q^\top \otimes I +
\bigoplus_{i=1}^N \big(I_m \otimes A_i \big)_{[p]} \\
+ 
\sum_{i=1}^N \Bigg[\sum_{\alpha=2}^m \sum_{\beta<\alpha} w_{\alpha \beta}^i \Big (E_{ii} \otimes \big( R_{\alpha \beta} \otimes I_n \big)_{[p]} \Big) \Bigg]. 
\end{multlined}
\end{aligned}
\end{equation}
Let 
\begin{equation}
\begin{aligned}
&A_0=Q^\top \otimes I +
\bigoplus_{i=1}^N \big(I_m \otimes A_i \big)_{[p]}, 
\\
&Z_{i \alpha \beta}= E_{ii} \otimes \big(  R_{\alpha \beta} \otimes I_n \big)_{[p]}. 
\end{aligned}
\end{equation}
Then, we can rewrite \eqref{eq;it} as
\begin{equation}\label{eq:affine}
\hat{\i T} = A_0+\sum_{i=1}^N \sum_{\alpha=2}^m \sum_{\beta<\alpha} w_{\alpha \beta}^i Z_{i\alpha \beta}, 
\end{equation}
which shows that the matrix~$\hat{\i T}$ depends linearly on the variables~$w_{\alpha \beta}^i$. Therefore, we can apply gradient-based non-smooth optimization algorithms (e.g., \cite{Burke2002}) to sub-optimally compute the quantity~$\i V_m$. 

\begin{example}
Let us consider the Markov jump linear system~\eqref{eq:MJLS} with
\begin{equation}
A_1=
\begin{bmatrix} 
1.1&1.8   \\
1.75&-0.5 \end{bmatrix}  
,\ 
A_2=\begin{bmatrix} 
-1.1&-2.05   \\
1.95&-0.15 \end{bmatrix} 
\end{equation}
and the $\{1, 2\}$-valued Markov process~$r$ having the infinitesimal generator
\begin{equation}
Q=
\begin{bmatrix} 
-10&10  \\
10&-10 
\end{bmatrix}. 
\end{equation}
Using the MATLAB implementation of the gradient-based non-smooth optimization algorithm~\cite{Curtis2017} and the affine representation~\eqref{eq:affine} of the matrix~$\hat{\i T}$, we obtain $\i V_2 \geq 0.29$. Therefore, Corollary~\ref{cor:skewsymmetric} implies the exponential mean instability of the Markov jump linear system. Notice that the instability criterion given in Proposition~\ref{prop:instability} fails because $\mu({\i T})=-0.07$ is negative.
\end{example}

\section{Conclusion} \label{sec:conclusion}

In this paper, we have proposed a novel criterion for the exponential mean instability of Markov jump linear systems. By considering an auxiliary Markov jump linear system constructed from the Kronecker products of the system matrices and a set of appropriate matrix weights, we have shown that the Markov jump linear system is not exponentially mean stable if a certain matrix parametrized by the matrix weights is not Hurwitz stable. Furthermore, we have confirmed that our instability analysis becomes tighter as we increase the order of the matrix weights. We have also presented a practical procedure based on spectral optimization to find matrix weights for tighter instability analysis. We have finally illustrated the effectiveness of our results by numerical examples.

\section*{References}
\bibliography{library,mendeley_v2}

\begin{thebibliography}{10}
\expandafter\ifx\csname url\endcsname\relax
  \def\url#1{\texttt{#1}}\fi
\expandafter\ifx\csname urlprefix\endcsname\relax\def\urlprefix{URL }\fi
\expandafter\ifx\csname href\endcsname\relax
  \def\href#1#2{#2} \def\path#1{#1}\fi

\bibitem{Lin2009}
H.~Lin, P.~Antsaklis, {Stability and stabilizability of switched linear
  systems: a survey of recent results}, IEEE Transactions on Automatic Control
  54~(2) (2009) 308--322.

\bibitem{Costa2013}
O.~L.~V. Costa, M.~D. Fragoso, M.~G. Todorov, {Continuous-time Markov Jump
  Linear Systems}, Springer, 2013.
\newblock \href {http://dx.doi.org/10.1007/978-3-642-34100-7}
  {\path{doi:10.1007/978-3-642-34100-7}}.

\bibitem{Shi2015}
P.~Shi, F.~Li, {A survey on Markovian jump systems: Modeling and design},
  International Journal of Control, Automation and Systems 13~(1) (2015) 1--16.
\newblock \href {http://dx.doi.org/10.1007/s12555-014-0576-4}
  {\path{doi:10.1007/s12555-014-0576-4}}.

\bibitem{PengShi1999a}
Peng Shi, E.-K. Boukas, R.~Agarwal, {Control of Markovian jump discrete-time
  systems with norm bounded uncertainty and unknown delay}, IEEE Transactions
  on Automatic Control 44~(11) (1999) 2139--2144.
\newblock \href {http://dx.doi.org/10.1109/9.802932}
  {\path{doi:10.1109/9.802932}}.

\bibitem{Fragoso2005}
M.~D. Fragoso, O.~L.~V. Costa, {A unified approach for stochastic and mean
  square stability of continuous-time linear systems with Markovian jumping
  parameters and additive disturbances}, SIAM Journal on Control and
  Optimization 44~(4) (2005) 1165--1191.
\newblock \href {http://dx.doi.org/10.1137/S0363012903434753}
  {\path{doi:10.1137/S0363012903434753}}.

\bibitem{Ogura2016n}
M.~Ogura, M.~Wakaiki, H.~Rubin, V.~M. Preciado,
  {Delayed
  bet-hedging resilience strategies under environmental fluctuations},
  Physical Review E 95 (2017) 052404.
\newblock \href {http://arxiv.org/abs/1702.00094} {\path{arXiv:1702.00094}},
  \href {http://dx.doi.org/10.1103/PhysRevE.95.052404}
  {\path{doi:10.1103/PhysRevE.95.052404}}.

\bibitem{Ogura2015c}
M.~Ogura, V.~M. Preciado,
  {Stability of spreading
  processes over time-varying large-scale networks}, IEEE Transactions on
  Network Science and Engineering 3~(1) (2016) 44--57.
\newblock \href {http://arxiv.org/abs/1507.07017} {\path{arXiv:1507.07017}},
  \href {http://dx.doi.org/10.1109/TNSE.2016.2516346}
  {\path{doi:10.1109/TNSE.2016.2516346}}.

\bibitem{Feng1992}
X.~Feng, K.~Loparo, Y.~Ji, H.~Chizeck, {Stochastic stability properties of jump
  linear systems}, IEEE Transactions on Automatic Control 37 (1992) 38--53.
\newblock \href {http://dx.doi.org/10.1109/9.109637}
  {\path{doi:10.1109/9.109637}}.

\bibitem{Fang2002a}
Y.~Fang, K.~Loparo, {Stochastic stability of jump linear systems}, IEEE
  Transactions on Automatic Control 47~(7) (2002) 1204--1208.
\newblock \href {http://dx.doi.org/10.1109/TAC.2002.800674}
  {\path{doi:10.1109/TAC.2002.800674}}.

\bibitem{Bolzern2010}
P.~Bolzern, P.~Colaneri, G.~De Nicolao,
  {Markov
  Jump Linear Systems with switching transition rates: Mean square stability
  with dwell-time}, Automatica 46~(6) (2010) 1081--1088.
\newblock \href {http://dx.doi.org/10.1016/j.automatica.2010.03.007}
  {\path{doi:10.1016/j.automatica.2010.03.007}}.

\bibitem{Ogura2014}
M.~Ogura, C.~F. Martin, {Stability analysis of positive semi-Markovian jump
  linear systems with state resets}, SIAM Journal on Control and Optimization
  52~(3) (2014) 1809--1831.
\newblock \href {http://dx.doi.org/10.1137/130925177}
  {\path{doi:10.1137/130925177}}.

\bibitem{Li2011a}
X.~Li, A.~Gray, D.~Jiang, X.~Mao,
  {Sufficient and necessary
  conditions of stochastic permanence and extinction for stochastic logistic
  populations under regime switching}, Journal of Mathematical Analysis and
  Applications 376~(1) (2011) 11--28.
\newblock \href {http://dx.doi.org/10.1016/j.jmaa.2010.10.053}
  {\path{doi:10.1016/j.jmaa.2010.10.053}}.

\bibitem{Bao2016}
J.~Bao, J.~Shao,
  {Permanence and
  extinction of regime-switching predator-prey models}, SIAM Journal on
  Mathematical Analysis 48~(1) (2016) 725--739.
\newblock \href {http://dx.doi.org/10.1137/15M1024512}
  {\path{doi:10.1137/15M1024512}}.

\bibitem{Yin2010}
G.~Yin, F.~Xi,
  {Stability of
  regime-switching jump diffusions}, SIAM Journal on Control and Optimization
  48~(7) (2010) 4525--4549.
\newblock \href {http://dx.doi.org/10.1137/080738301}
  {\path{doi:10.1137/080738301}}.

\bibitem{Shao2014}
J.~Shao, F.~Xi, {Stability
  and recurrence of regime-switching diffusion processes}, SIAM Journal on
  Control and Optimization 52~(6) (2014) 3496--3516.
\newblock \href {http://dx.doi.org/10.1137/140962905}
  {\path{doi:10.1137/140962905}}.

\bibitem{Liu2010}
M.~Liu, K.~Wang,
  {Persistence and
  extinction of a stochastic single-specie model under regime switching in a
  polluted environment}, Journal of Theoretical Biology 267~(3) (2010)
  283--291.
\newblock \href {http://dx.doi.org/10.1016/j.jtbi.2010.08.030}
  {\path{doi:10.1016/j.jtbi.2010.08.030}}.

\bibitem{Liu2015b}
M.~Liu,{Survival analysis
  of a cooperation system with random perturbations in a polluted
  environment}, Nonlinear Analysis: Hybrid Systems 18 (2015) 100--116.
\newblock \href {http://dx.doi.org/10.1016/j.nahs.2015.06.005}
  {\path{doi:10.1016/j.nahs.2015.06.005}}.

\bibitem{Li2009}
X.~Li, D.~Jiang, X.~Mao,
  {Population dynamical
  behavior of Lotka-Volterra system under regime switching}, Journal of
  Computational and Applied Mathematics 232~(2) (2009) 427--448.
\newblock \href {http://dx.doi.org/10.1016/j.cam.2009.06.021}
  {\path{doi:10.1016/j.cam.2009.06.021}}.

\bibitem{Ogura2016d}
M.~Ogura, V.~M. Preciado, R.~M. Jungers,
  {Efficient
  method for computing lower bounds on the $p$-radius of switched linear
  systems}, Systems \& Control Letters 94 (2016) 159--164.
\newblock \href {http://arxiv.org/abs/1503.03034} {\path{arXiv:1503.03034}},
  \href {http://dx.doi.org/10.1016/j.sysconle.2016.06.008}
  {\path{doi:10.1016/j.sysconle.2016.06.008}}.

\bibitem{Brewer1978}
J.~Brewer, {Kronecker products and matrix calculus in system theory}, IEEE
  Transactions on Circuits and Systems 25~(9) (1978) 772--781.
\newblock \href {http://dx.doi.org/10.1109/TCS.1978.1084534}
  {\path{doi:10.1109/TCS.1978.1084534}}.

\bibitem{Barkin1983}
A.~Barkin, A.~Zelentsovsky,
  {Method
  of power transformations for analysis of stability of nonlinear control
  systems}, Systems \& Control Letters 3~(5) (1983) 303--310.
\newblock \href {http://dx.doi.org/10.1016/0167-6911(83)90030-0}
  {\path{doi:10.1016/0167-6911(83)90030-0}}.

\bibitem{Bolzern2014}
P.~Bolzern, P.~Colaneri, G.~De Nicolao, {Stochastic stability of Positive
  Markov Jump Linear Systems}, Automatica 50 (2014) 1181--1187.
\newblock \href {http://dx.doi.org/10.1016/j.automatica.2014.02.016}
  {\path{doi:10.1016/j.automatica.2014.02.016}}.

\bibitem{Farina2000}
L.~Farina, S.~Rinaldi, {Positive Linear Systems: Theory and Applications},
  Wiley-Interscience, 2000.

\bibitem{Gurvits2009}
L.~Gurvits, A.~Olshevsky, {On the NP-hardness of checking matrix polytope
  stability and continuous-time switching stability}, IEEE Transactions on
  Automatic Control 54~(2) (2009) 337--341.
\newblock \href {http://dx.doi.org/10.1109/TAC.2008.2007177}
  {\path{doi:10.1109/TAC.2008.2007177}}.

\bibitem{Burke2002}
J.~V. Burke, A.~S. Lewis, M.~L. Overton, {Two numerical methods for optimizing
  matrix stability}, Linear Algebra and its Applications 351-352 (2002)
  117--145.
\newblock \href {http://dx.doi.org/10.1016/S0024-3795(02)00260-4}
  {\path{doi:10.1016/S0024-3795(02)00260-4}}.

\bibitem{Curtis2017}
F.~E. Curtis, T.~Mitchell, M.~L. Overton, {A BFGS-SQP method for nonsmooth,
  nonconvex, constrained optimization and its evaluation using relative
  minimization profiles}, Optimization Methods and Software 32~(1) (2017)
  148--181.
\newblock \href {http://dx.doi.org/10.1080/10556788.2016.1208749}
  {\path{doi:10.1080/10556788.2016.1208749}}.

\end{thebibliography}

\end{document}